\documentclass[english]{amsart}

\usepackage{esint}
\usepackage[svgnames]{xcolor} 
\usepackage[colorlinks,citecolor=red,pagebackref,
hypertexnames=false,breaklinks]{hyperref}
\usepackage{pgf,tikz}
\usepackage{pdfsync}

\usepackage{dsfont}
\usepackage{url}
\usepackage[utf8]{inputenc}
\usepackage[T1]{fontenc}
\usepackage{lmodern}
\usepackage{babel}
\usepackage{mathtools}  
\usepackage{amssymb}
\usepackage{lipsum}
\usepackage{mathrsfs}
\usepackage{color}

\usepackage[skip=2.2pt plus 1pt, indent=12pt]{parskip}

\newtheorem{theorem}{Theorem}[section]

\newtheorem{lemma}{Lemma}[section]

\numberwithin{equation}{section}

\title[Stability of determining a Dirichlet-Laplace-Beltrami]{Stability of 
determining a Dirichlet-Laplace-Beltrami operator from its boundary spectral 
data}

\author[Mourad Choulli]{Mourad Choulli}
\address{Universit\'e de Lorraine}
\email{mourad.choulli@univ-lorraine.fr}

\author[Masahiro Yamamoto]{Masahiro Yamamoto}
\address{Department of Mathematical Sciences, The University of Tokyo 3-8-1, 
Komaba, Meguro, Tokyo 153, Japan.
}
\email{myama@ms.u-tokyo.ac.jp}

\begin{document}

\begin{abstract}
We establish stability inequalities for the problem of determining 
a Dirichlet-Laplace-Beltrami operator from its boundary spectral data. 
We study  the case of complete spectral data as well as the case of partial 
spectral data.
\end{abstract}

\subjclass[2010]{35R30}

\keywords{Compact manifold, metric tensor, conformal factor, boundary spectral 
data, elliptic Dirichlet-to-Neumann map, hyperbolic Dirichlet-to-Neumann map.}

\maketitle


\section{Stability in relationship with the elliptic Dirichlet-to-Neumann map}
\label{section1}

Let $\mathcal{M}$ be a $C^\infty$ connected compact manifold with boundary 
$\partial \mathcal{M}$ of dimension $n\ge 3$. We denote the set of metric 
tensors on $\mathcal{M}$ by $\mathscr{G}$.  Here by a metric tensor
$\mathfrak{g}$, we mean that $\mathfrak{g}$ is positive-definite, 
$\mathfrak{g}, \mathfrak{g}^{-1} \in C^{\infty}(\mathcal{M})$.

Recall that the Laplace-Beltrami operator $\Delta_\mathfrak{g}$, with 
$\mathfrak{g}=(\mathfrak{g}_{k\ell})\in \mathscr{G}$, is given in local 
coordinates by
\[
\Delta_\mathfrak{g}=\frac{1}{\sqrt{|\mathfrak{g}|}}
\sum_{k,\ell=1}^n\frac{\partial}{\partial x_k}\left(\sqrt{|\mathfrak{g}|}
\mathfrak{g}^{k\ell}\frac{\partial}{\partial x_\ell}\, \cdot \right),
\]
where $(\mathfrak{g}^{k\ell})$ is the inverse matrix to 
$(\mathfrak{g}_{k\ell})$ and $|\mathfrak{g}|$ is the determinant 
of $\mathfrak{g}$.

Let $dV_{\mathfrak{g}}=\sqrt{|\mathfrak{g}|}dx^1\ldots dx^n$ denote 
the Riemannian measure associated to $\mathfrak{g}$. 
We define an unbounded operator 
\[
A_{\mathfrak{g}}:L^2(\mathcal{M},dV_\mathfrak{g})\rightarrow 
L^2(\mathcal{M},dV_\mathfrak{g})
\] 
as follows:
\[
A_{\mathfrak{g}}=-\Delta _{\mathfrak{g}}\quad \mbox{with}\quad 
D\left(A_{\mathfrak{g}}\right)=H_0^1(\mathcal{M})\cap H^2(\mathcal{M}).
\]
Since $A_{\mathfrak{g}}$ is self-adjoint operator with compact resolvent,
its spectrum, denoted by $\sigma(A_{\mathfrak{g}})$, consists of a sequence 
of eigenvalues satisfying
\[
0 < \lambda_1^{\mathfrak{g}}<\lambda_2^{\mathfrak{g}}
\le  \ldots \lambda_k^{\mathfrak{g}}\le \ldots \quad \mbox{and}\quad 
\lambda_k^{\mathfrak{g}}\rightarrow \infty \; \mbox{as}\; k\rightarrow \infty.
\]

Furthermore, there exists an orthonormal basis $(\phi_k^ {\mathfrak{g}})$ 
of $L^2(\mathcal{M},dV_{\mathfrak{g}})$ consisting of eigenfunctions, where 
each $\phi_k^{\mathfrak{g}}$ is an eigenfunction for 
$\lambda_k^{\mathfrak{g}}$. By $\mathfrak{g} \in C^\infty(\mathcal{M})$ and the usual Sobolev 
regularity or H\"older regularity for elliptic equations, we can derive
$\phi_k^{\mathfrak{g}}\in C^\infty(\mathcal{M})$, $k\ge 1$. Henceforth we use the following notation: 
\[
\psi_k^{\mathfrak{g}}=\partial_\nu \phi_k^{\mathfrak{g}}|
{_{\partial \mathcal{M}}}, \quad k\ge 1,
\]
where $\nu$ is the unit normal exterior vector field on 
$\partial \mathcal{M}$ with respect to $\mathfrak{g}$.

For each $\mathfrak{g}\in \mathscr{G}$, we call the sequence $(\lambda_k^{\mathfrak{g}},\psi_k^{\mathfrak{g}})$ 
the boundary spectral data associated to $A_{\mathfrak{g}}$. We ask whether the boundary spectral data determine uniquely its corresponding 
Dirichlet-Laplace-Beltrami operator. In other words, does 
$(\lambda_k^{\mathfrak{g}},\psi_k^{\mathfrak{g}})$ determine uniquely 
$\mathfrak{g}$ ?

Before we give an answer to this question,  we need to introduce the notion 
of gauge equivalent operators. For $\chi\in C^\infty (\mathcal{M})$ satisfying $\chi >0$, we 
define an unbounded operator 
\[
A_\mathfrak{g}^\chi:L^2(\mathcal{M},\chi^{-2}dV)\rightarrow 
L^2(\mathcal{M},\chi^{-2}dV)
\]
by
\[
A_\mathfrak{g}^\chi=\chi A_\mathfrak{g}\chi^{-1},\quad D(A_\mathfrak{g}^\chi)
= D(A_\mathfrak{g}).
\]

Let $\mathfrak{g}_1, \mathfrak{g}_2\in \mathscr{G}$. We say that the operators 
$A_{\mathfrak{g}_1}$ and $A_{\mathfrak{g}_2}$ are gauge equivalent 
if $A_{\mathfrak{g}_2}=A_{\mathfrak{g}_1}^\chi$ for some 
$\chi \in C^\infty(\mathcal{M})$ satisfying $\chi >0$.

It is worth noticing that when $A_{\mathfrak{g}_1}$ and $A_{\mathfrak{g}_2}$ 
are gauge equivalent, 
we have $(\mathfrak{g}_1^{k\ell})=(\mathfrak{g}_2^{k\ell})$ 
(\cite[Section 2.2.9]{KKL}). 
Also, if $A_{\mathfrak{g}_1}$ and $A_{\mathfrak{g}_2}$ are gauge equivalent, 
then  $(\lambda^{\mathfrak{g}_2}_k)= (\lambda^{g_1}_k)$ (\cite[(2.54)]{KKL}) 
and $(\phi^{\mathfrak{g}_2}_k)= (\chi\phi^{\mathfrak{g}_1}_k)$ 
(\cite[(2.55)]{KKL}). 
Thus, if $\chi=1$ on $\partial \mathcal{M}$, then $A_{\mathfrak{g}_1}$ 
and $A_{\mathfrak{g}_2}$ have the same boundary spectral data. Conversely, if $A_{\mathfrak{g}_1}$ and $A_{\mathfrak{g}_2}$ have the same 
boundary spectral data, then $A_{\mathfrak{g}_1}$ and $A_{\mathfrak{g}_2}$ 
are gauge equivalent (\cite[Theorem 3.3]{KKL}).

Our analysis in the present section is based on the relationship 
between the boundary spectral data $(\lambda_k^{\mathfrak{g}},\psi_k^{\mathfrak{g}})$, $\mathfrak{g}\in \mathscr{G}$, and the so-called 
Dirichlet-to-Neumann map associated to $\mathfrak{g}$.

Before defining the Dirichlet-to-Neumann map, we observe that, 
since $0$ is not 
in the spectrum of $A_{\mathfrak{g}}$ with $\mathfrak{g}\in \mathscr{G}$, 
for every $f\in H^{\frac{1}{2}}(\partial \mathcal{M})$ the boundary value problem 
(BVP in short)
\begin{equation}\label{bvp}
\Delta _{\mathfrak{g}}u=0\; \mbox{in}\; \mathcal{M}\quad \mbox{and}\quad 
\mbox u=f\; \mbox{on}\; \partial \mathcal{M}
\end{equation}
admits a unique solution $u_{\mathfrak{g}}(f)\in H^1(\mathcal{M})$ 
with $\partial_\nu u_{\mathfrak{g}}(f)\in H^{-\frac{1}{2}}(\partial \mathcal{M})$. 
In addition, we have 
\begin{equation}\label{dn}
\|\partial_\nu u_{\mathfrak{g}}(f)\|_{H^{-\frac{1}{2}}(\partial \mathcal{M})}
\le C\|f\|_{H^{\frac{1}{2}}(\partial \mathcal{M})},
\end{equation}
where the constant $C>0$ is independent of $f$. 

We define the Dirichlet-to-Neumann map associated to 
$\mathfrak{g}\in \mathscr{G}$ as follows:
\[
\Lambda_{\mathfrak{g}} :f\in H^{\frac{1}{2}}(\partial \mathcal{M})\rightarrow 
\partial_\nu u_{\mathfrak{g}}(f)\in H^{-\frac{1}{2}}(\partial \mathcal{M}).
\]
It follows from \eqref{dn} that $ \Lambda_{\mathfrak{g}}\in 
\mathscr{B}(H^{\frac{1}{2}}(\partial \mathcal{M}),H^{-\frac{1}{2}}(\partial \mathcal{M}))$.

The determination of $\mathfrak{g}\in \mathscr{G}$ from the  
Dirichlet-to-Neumann map $\Lambda_{\mathfrak{g}}$ is  a great interest 
and appears in many applications. 
This inverse problem can be viewed for instance as the geometric 
formulation of the problem in which one wants to know whether it is possible 
to determine an anisotropic conductivity from the knowledge of boundary 
measurements. It is well known that there is an obstruction to uniqueness. 
Indeed, if $F:\mathcal{M}\rightarrow \mathcal{M}$ is 
a $C^\infty$-diffeomorphism which is the identity on the boundary, 
then $\Lambda_{F^\ast\mathfrak{g}}=\Lambda_{\mathfrak{g}}$, 
$\mathfrak{g}\in \mathscr{G}$, where $F^\ast\mathfrak{g}$ denotes 
the pull-back of $\mathfrak{g}$ by $F$.

 \subsection{Complete boundary spectral data}\label{subsection1.1}
 
Choose $\mathfrak{g}'\in \mathscr{G}$ and a constant $\mathbf{m} > 0$ 
satisfying $\mathbf{m}>\|\mathfrak{g}'\|_{W^{2,\infty}(\mathcal{M})}$. 
Let 
\[
\mathscr{G}'=\left\{\mathfrak{g}\in \mathscr{G};\; \mathfrak{g}\ge \mathfrak{g}',\; \mbox{and}\; \|\mathfrak{g}\|_{W^{2,\infty}(\mathcal{M})}\le \mathbf{m}\right\}.
\]
Also, fix $\mathfrak{g}_0\in \mathscr{G}'$ and define
\[
\mathscr{G}_0=\{\mathfrak{g}\in \mathscr{G}';\; \mathfrak{g}=\mathfrak{g}_0\; 
\mbox{on}\; \partial \mathcal{M}\}.
\]
 
Throughout this article, the sequences $(\alpha_k)$ and $(\beta_k)$ are 
respectively given by
\begin{equation}\label{se}
\alpha_k=k^{-\frac{1-2s}{2n}},\quad \beta_k=k^{\frac{1+2s}{n}},\quad  k\ge 1,
\end{equation}
where $0<s <1/2$ is fixed. For $\mathfrak{g}_1,\mathfrak{g}_2\in \mathscr{G}_0$, set  
\begin{equation}\label{dist}
\mathfrak{D}(\mathfrak{g}_1,\mathfrak{g}_2)
= \sum_{k\ge 1}\left[\alpha_k^2\left|\lambda_k^{\mathfrak{g}_1}
-\lambda_k^{\mathfrak{g}_2}\right|
+ \alpha_k\left\|\psi_k^{\mathfrak{g_1}}
-\psi_k^{\mathfrak{g}_2}\right\|_{L^2(\partial \mathcal{M})}\right] ,
\end{equation}
and
\begin{equation}\label{dist+}
\mathfrak{D}^+(\mathfrak{g}_1,\mathfrak{g}_2)
=\sum_{k\ge 1}\left[\beta_k\left|\lambda_k^{\mathfrak{g}_1}
-\lambda_k^{\mathfrak{g}_2}\right|+ \alpha_k\left\|\psi_k^{\mathfrak{g_1}}
-\psi_k^{\mathfrak{g}_2}\right\|_{L^2(\partial \mathcal{M})}\right].
\end{equation}

We say that $(\mathcal{M},\mathfrak{g})$, $\mathfrak{g}\in \mathscr{G}$, 
is admissible if $\mathcal{M}\Subset \mathbb{R}\times \tilde{\mathcal{M}}$
with some $(n-1)$-dimensional simple manifold 
$(\tilde{\mathcal{M}},\tilde{\mathfrak{g}})$ and 
$\mathfrak{g}=c(\mathfrak{e}\oplus \tilde{\mathfrak{g}})$, 
where $\mathfrak{e}$ is the Euclidean metric on $\mathbb{R}$ and 
$c\in C^\infty(\mathcal{M})$. 
Here by a simple manifold $(\tilde{\mathcal{M}},\tilde{\mathfrak{g}})$, we 
mean that for every $x\in \tilde{\mathcal{M}}$,
the exponential map $\exp_x$ with its maximal domain of definition is 
a diffeomorphism onto $\tilde{\mathcal{M}}$, 
and $\partial \tilde{\mathcal{M}}$ is strictly convex 
(that is, the second fundamental form on $\partial \tilde{\mathcal{M}}
\hookrightarrow \tilde{\mathcal{M}}$ is positive definite).
Examples of admissible manifolds can be found in \cite{DKSU}.

Define
\begin{equation}\label{Phi}
\Phi (\rho)=\ln\left(\rho+|\ln \rho|^{-1}\right),\quad 0<\rho<1.
\end{equation}
For fixed $\mathbf{r}>1$, we set 
\[
\mathscr{C}=\left\{c\in C^\infty (\mathcal{M});\; 
c=1\; \mbox{on}\; \partial \mathcal{M}\; \mbox{and}\; \|c\|_{C^3(\mathcal{M})}
+\|c^{-1}\|_{L^\infty (\mathcal{M})}\le \mathbf{r}\right\},
\]
and 
\[
\mathscr{G}_1=\{ c\mathfrak{g}_0;\; c\in \mathscr{C}\}.
\]
Note that $\mathscr{G}_1\subset \mathscr{G}_0$ for some $\mathbf{m}$ depending 
on $\mathbf{r}$ and $\mathfrak{g}_0$.

Our first main result is the following stability inequality.

\begin{theorem}\label{theoremCSD}
Assume that $(\mathcal{M}, \mathfrak{g}_0)$ is admissible. 
If $\mathfrak{g}_1=c_1\mathfrak{g}_0\in \mathscr{G}_1$ 
and $\mathfrak{g}_2=c_2\mathfrak{g}_0\in \mathscr{G}_1$ satisfy 
$\mathfrak{D}^+(\mathfrak{g}_1,\mathfrak{g}_2)<\infty$ and 
$\mathfrak{D}=\mathfrak{D}(\mathfrak{g}_1,\mathfrak{g}_2)\le e^{-\varrho}$, 
then we have
\[
\|c_1-c_2\|_{L^\infty(\mathcal{M})}
\le C\left[\Phi \left(\mathfrak{D}\right)\right]^{-\theta},
\]
where $C>0$ and $\varrho>0$ depend only on $n$, 
$\mathcal{M}$, $\mathfrak{g}_0$, $s$ and $\mathbf{r}$, while $\theta$ 
depends only on $n$. 
\end{theorem}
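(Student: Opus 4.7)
The plan is to reduce Theorem \ref{theoremCSD} to a logarithmic stability estimate for the conformal Calder\'on problem on an admissible manifold, by first reconstructing the elliptic Dirichlet-to-Neumann map $\Lambda_{\mathfrak{g}}$ from the boundary spectral data in a quantitative way.

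First, I would derive the spectral representation of $\Lambda_{\mathfrak{g}}$. Applying Green's formula to a harmonic extension $u = u_{\mathfrak{g}}(f)$ and to an eigenfunction $\phi_k^{\mathfrak{g}}$ produces $\langle u,\phi_k^{\mathfrak{g}}\rangle_{L^2(\mathcal{M},dV_{\mathfrak{g}})} = \pm(\lambda_k^{\mathfrak{g}})^{-1}\langle f,\psi_k^{\mathfrak{g}}\rangle_{L^2(\partial\mathcal{M})}$. Expanding $u$ in the eigenbasis of $A_{\mathfrak{g}}$ and taking the normal trace (justified by Sobolev regularity) yields a representation of the form
\[
\Lambda_{\mathfrak{g}} f \;=\; \pm\sum_{k\ge 1}\frac{\langle f,\psi_k^{\mathfrak{g}}\rangle_{L^2(\partial\mathcal{M})}}{\lambda_k^{\mathfrak{g}}}\,\psi_k^{\mathfrak{g}}.
\]
Subtracting the two representations term by term and using Weyl's asymptotics $\lambda_k^{\mathfrak{g}} \sim k^{2/n}$ together with the boundary estimate $\|\psi_k^{\mathfrak{g}}\|_{L^2(\partial\mathcal{M})} \le C(\lambda_k^{\mathfrak{g}})^{1/2}$ (both uniform on the class $\mathscr{G}_0$) produces a series bound for the operator norm of $\Lambda_{\mathfrak{g}_1}-\Lambda_{\mathfrak{g}_2}$ between suitable fractional Sobolev spaces on $\partial\mathcal{M}$. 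The role of the parameter $s\in(0,1/2)$ in \eqref{se} is precisely to calibrate the prefactors of $|\lambda_k^{\mathfrak{g}_1}-\lambda_k^{\mathfrak{g}_2}|$ and $\|\psi_k^{\mathfrak{g}_1}-\psi_k^{\mathfrak{g}_2}\|_{L^2(\partial\mathcal{M})}$ so that they are dominated by the weights $\alpha_k^2$ and $\alpha_k$ appearing in $\mathfrak{D}$.

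The hypothesis $\mathfrak{D}^+<\infty$ is used to justify convergence of the tail in a stronger topology and to perform a head-tail decomposition at a cutoff $N$: the head $k\le N$ is bounded by $C\mathfrak{D}$, while the tail $k>N$ is controlled through the bound $\sum_{k>N}\beta_k|\lambda_k^{\mathfrak{g}_1}-\lambda_k^{\mathfrak{g}_2}|\le \mathfrak{D}^+$ together with the decay factor $\alpha_k^2/\beta_k = k^{-2/n}$, producing a contribution of order $N^{-\gamma}\mathfrak{D}^+$ for some $\gamma=\gamma(n,s)>0$. Choosing $N$ as a suitable negative power of $\mathfrak{D}$ balances the two contributions and gives a polynomial bound
\[
\|\Lambda_{\mathfrak{g}_1}-\Lambda_{\mathfrak{g}_2}\|_{H^{1/2}(\partial\mathcal{M})\to H^{-1/2}(\partial\mathcal{M})} \;\le\; C\,\mathfrak{D}^{\delta}
\]
for some explicit $\delta=\delta(n,s)>0$.

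Finally, I would invoke the stability version of the Dos Santos Ferreira--Kenig--Salo--Uhlmann uniqueness theorem for the conformal Calder\'on problem on admissible manifolds: under $c_j = 1$ on $\partial\mathcal{M}$,
\[
\|c_1 - c_2\|_{L^\infty(\mathcal{M})} \;\le\; C\,\bigl|\ln\|\Lambda_{\mathfrak{g}_1}-\Lambda_{\mathfrak{g}_2}\|\bigr|^{-\theta}
\]
with $\theta=\theta(n)>0$; this estimate is obtained from complex geometrical optics solutions on $\mathbb{R}\times\tilde{\mathcal{M}}$ together with the quantitative invertibility of the (attenuated) geodesic ray transform on the transversal simple manifold $(\tilde{\mathcal{M}},\tilde{\mathfrak{g}})$. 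Composing with the polynomial bound of the preceding paragraph, the outer logarithm becomes $-\delta\ln\mathfrak{D}+O(1)$, and the $|\ln\rho|^{-1}$ correction built into the definition \eqref{Phi} of $\Phi$ absorbs the $O(1)$ remainder; the smallness assumption $\mathfrak{D}\le e^{-\varrho}$ ensures that $|\Phi(\mathfrak{D})|$ is well defined and bounded away from zero. The main obstacle is the stability version of the DKSU theorem: the severe ill-posedness of the Calder\'on problem on admissible manifolds is what forces the final bound to be only double-logarithmic in $\mathfrak{D}$, and its explicit proof requires a careful quantitative analysis of the boundary integral identity of Alessandrini type, combined with a stable inversion of the ray transform on $\tilde{\mathcal{M}}$.
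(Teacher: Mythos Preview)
Your overall strategy matches the paper's: reconstruct $\Lambda_{\mathfrak{g}_1}-\Lambda_{\mathfrak{g}_2}$ from the boundary spectral data and then invoke the Caro--Salo stability result for the conformal Calder\'on problem on admissible manifolds (Theorem~\ref{theoremCS}). The final step is exactly what the paper does.

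However, the intermediate step differs in an important way, and your version has a gap. The series $\sum_k (\lambda_k^{\mathfrak{g}})^{-1}\langle f,\psi_k^{\mathfrak{g}}\rangle\,\psi_k^{\mathfrak{g}}$ does \emph{not} converge in $L^2(\partial\mathcal{M})$ for a single metric: $\Lambda_{\mathfrak{g}}$ is a first-order pseudodifferential operator, so ``subtracting the two representations term by term'' needs justification beyond a formal manipulation. Your head--tail argument would bound the formal difference series, but it does not explain why that series actually equals $(\Lambda_{\mathfrak{g}_1}-\Lambda_{\mathfrak{g}_2})f$. The paper handles this via an Isozaki-type trick: it introduces the shifted problems $(-\Delta_{\mathfrak{g}}+\lambda)u=0$, writes the identity
\[
(\Lambda_{\mathfrak{g}_1}-\Lambda_{\mathfrak{g}_2})f-\sum_{k\ge 1}\bigl(a_k^{\mathfrak{g}_1}-a_k^{\mathfrak{g}_2}\bigr)
=\bigl(\mathfrak{t}u_{\mathfrak{g}_1}^\lambda(f)-\mathfrak{t}u_{\mathfrak{g}_2}^\lambda(f)\bigr)+R(\lambda),
\]
and shows that both terms on the right tend to $0$ as $\lambda\to\infty$ by dominated convergence. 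The hypothesis $\mathfrak{D}^+(\mathfrak{g}_1,\mathfrak{g}_2)<\infty$ enters \emph{only qualitatively}, precisely to dominate one of the remainder terms in this limit; it is not used quantitatively in a cutoff balancing as you propose.

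As a consequence, the paper obtains the clean \emph{linear} bound
\[
\|\Lambda_{\mathfrak{g}_1}-\Lambda_{\mathfrak{g}_2}\|_{\mathscr{B}(L^2(\partial\mathcal{M}))}\le C\,\mathfrak{D}(\mathfrak{g}_1,\mathfrak{g}_2),
\]
with $C$ independent of $\mathfrak{D}^+$, rather than a bound $C\mathfrak{D}^\delta$ with $C$ depending on $\mathfrak{D}^+$. Your balancing step is in fact unnecessary: once the identification is justified, the three-term splitting $w_\ell=w_\ell^1+w_\ell^2+w_\ell^3$ of the partial sums is bounded by $C\mathfrak{D}$ uniformly in $\ell$, so one simply lets $\ell\to\infty$. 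After composing with the Caro--Salo logarithmic estimate the distinction between $\mathfrak{D}$ and $\mathfrak{D}^\delta$ is indeed harmless, but your route would make the final constant depend on $\mathfrak{D}^+$, which is not what the theorem asserts.
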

 
\subsection{Partial boundary spectral data on a set of positive measure}\label{subsection1.2}
 
In this subsection, we suppose that $\mathcal{M}=\overline{\Omega}$, 
where $\Omega$ is $C^\infty$ bounded domain of $\mathbb{R}^n$. Furthermore, assume that $\Omega$ is chosen so that 
$\partial \Omega$ has a $C^\infty$ connected neighborhood $N$ 
in $\overline{\Omega}$.

Henceforth let $\Sigma$ be a measurable subset of $\partial \Omega$ satisfying
\[
|\Sigma \cap B(\tilde{x},r)|>0,\quad 0<r\le r_0,
\]
for some  $\tilde{x}\in \partial \Omega$ and  $r_0>0$,
where  $B(\tilde x, r):= \{ x\in \Omega;\, 
\vert x-\tilde{x} \vert < r\}$.

Consider the notations 
\begin{align*}
&\mathbf{D}=\mathbf{D}(\mathfrak{g}_1,\mathfrak{g}_2)
= \sum_{k\ge 1}\alpha_k\left\|\psi_k^{\mathfrak{g}_1}
-\psi_k^{\mathfrak{g}_2}\right\|_{L^\infty(\Sigma)}^{\frac{\alpha(1-2s)}{4}},
\\
&\mathbf{D}_0=\mathbf{D}_0(\mathfrak{g}_1,\mathfrak{g}_2)
=\sum_{k\ge 1}\left\|\psi_k^{\mathfrak{g}_1}-\psi_k^{\mathfrak{g}_1}\right\|
_{L^2(\partial \Omega)}.
\end{align*}
Hereinafter, 
\[
\displaystyle \Psi(\rho)= 
\left\{
\begin{array}{ll}
\rho, &0\le \rho\le 1,
\\
 (\ln \rho)^{-\frac{1-2s}{4}},\quad &\rho>1 ,
\end{array}
\right.
\]
and
\[
\Psi_\tau=\tau \Psi,\quad  \Phi_\tau=\Phi\circ \Psi_\tau,\quad \tau>0,
\]
where $\Phi$ is given by \eqref{Phi}.

\begin{theorem}\label{theoremPSD}
Assume that $(\overline{\Omega},\mathfrak{g}_0)$ is admissible. 
Let $\mathfrak{g}_1=c_1\mathfrak{g}_0\in \mathscr{G}_1$ and 
$\mathfrak{g}_2=c_2\mathfrak{g}_0\in \mathscr{G}_1$ satisfy $c_1=c_2$ in $N$, 
$\sigma (A_{\mathfrak{g}_1})=\sigma (A_{\mathfrak{g}_2})$, 
$\mathbf{D}=\mathbf{D}(\mathfrak{g}_1,\mathfrak{g}_2)<\infty$, 
$\mathbf{D}_0=\mathbf{D}_0(\mathfrak{g}_1,\mathfrak{g}_2)< \infty$ 
and $\mathfrak{D}(\mathfrak{g}_1,\mathfrak{g}_2)\le e^{-\varrho}$.  
Then
\[
\|c_1-c_2\|_{L^\infty(\mathcal{M})}\le C\left[\Phi_\tau 
\left(\frac{\mathbf{D}_0}{\mathbf{D}}\right)\right]^{-\theta},
\]
where $C>0$ and $c>0$ depend only on $n$, $\Omega$, $\Sigma$, 
$\mathfrak{g}_0$, $s$ and $\mathbf{r}$, while $\varrho$ and  $\theta$ are 
as in Theorem \ref{theoremCSD}.
\end{theorem}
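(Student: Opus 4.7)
The plan is to reduce this partial boundary spectral problem to a partial Dirichlet-to-Neumann stability problem on $\Sigma$, combine it with a quantitative unique continuation step made possible by the assumption $c_1=c_2$ on the boundary neighbourhood $N$, and conclude with the DtN stability for admissible conformal metrics already invoked in the proof of Theorem \ref{theoremCSD}. The hypothesis $\sigma(A_{\mathfrak{g}_1})=\sigma(A_{\mathfrak{g}_2})$ is essential here: it lets the spectral representation of the DtN kernel be manipulated as a series depending linearly on $\psi_k^{\mathfrak{g}_1}-\psi_k^{\mathfrak{g}_2}$ alone.

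First I would translate the boundary spectral data into an estimate on $\Lambda_{\mathfrak{g}_1}-\Lambda_{\mathfrak{g}_2}$. Writing $\lambda_k:=\lambda_k^{\mathfrak{g}_1}=\lambda_k^{\mathfrak{g}_2}$, the DtN bilinear form on $\partial\Omega$ expands in the pairs $\{(\lambda_k,\psi_k^{\mathfrak{g}_i})\}$; the finiteness of $\mathbf{D}_0$ then controls the difference in a Hilbert-Schmidt-type norm over $\partial\Omega\times\partial\Omega$, while the finiteness of $\mathbf{D}$ controls a partial $L^\infty$ norm over $\Sigma\times\Sigma$. Truncating the series at a level $N$ and optimising in $N$--balancing tail decay governed by $\alpha_k,\beta_k$ against the power of $\mathbf{D}$ inside the finite sum--produces an interpolation bound whose dependence on the ratio $\mathbf{D}_0/\mathbf{D}$ accounts for the shape of the final conclusion as well as for the fractional exponent appearing in the definition of $\mathbf{D}$.

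Next, because $c_1=c_2$ on $N$, the conformal Laplacians $\Delta_{\mathfrak{g}_1}$ and $\Delta_{\mathfrak{g}_2}$ coincide there, and for every boundary datum $f$ the function $u_{\mathfrak{g}_1}(f)-u_{\mathfrak{g}_2}(f)$ is a harmonic function in $N$ vanishing on $\partial\Omega$ whose normal trace on $\Sigma$ equals $(\Lambda_{\mathfrak{g}_1}-\Lambda_{\mathfrak{g}_2})f$ restricted to $\Sigma$. A quantitative three-ball/Carleman-type unique continuation argument then propagates the $L^\infty(\Sigma)$ control to an $L^2(\partial\Omega)$ control at the cost of a single logarithmic loss; this step is the source of the outer $\Phi$ in the final estimate and of the two branches (linear versus logarithmic) encoded in the definition of $\Psi$. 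Feeding the resulting full-boundary DtN bound into the conformal admissible DtN stability inequality used in Theorem \ref{theoremCSD} then closes the argument.

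The main obstacle is the intermediate step: aligning three distinct quantitative controls--tail decay of the spectral series through $\alpha_k$ and $\beta_k$, interpolation between global $L^2$ and partial $L^\infty$ data with the exponent dictated by Weyl-type bounds on $\|\psi_k^{\mathfrak{g}}\|_{L^\infty}$, and the logarithmic cost of propagating partial-to-full boundary information via unique continuation--so that all exponents align and the estimate closes at the precise rate $[\Phi_\tau(\mathbf{D}_0/\mathbf{D})]^{-\theta}$. The precise form of $\Psi_\tau$ and the fractional exponent in $\mathbf{D}$ are almost certainly dictated by making this optimisation sharp.
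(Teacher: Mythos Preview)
Your proposal routes the argument through the Dirichlet-to-Neumann map too early, and this creates a genuine gap. You want to bound $(\Lambda_{\mathfrak{g}_1}-\Lambda_{\mathfrak{g}_2})f|_\Sigma$ in terms of $\mathbf{D}$, then apply unique continuation to the harmonic function $u_{\mathfrak{g}_1}(f)-u_{\mathfrak{g}_2}(f)$ in $N$. But the spectral expansion of the DtN difference contains terms of the form $\langle f,\psi_k^{\mathfrak{g}_1}-\psi_k^{\mathfrak{g}_2}\rangle\,\psi_k^{\mathfrak{g}_1}$, where the inner product is taken over the \emph{whole} boundary $\partial\Omega$. There is no way to control this factor from $\|\psi_k^{\mathfrak{g}_1}-\psi_k^{\mathfrak{g}_2}\|_{L^\infty(\Sigma)}$ alone, so $\mathbf{D}$ by itself does not give a partial DtN bound. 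You also misread the role of the hypothesis $\sigma(A_{\mathfrak{g}_1})=\sigma(A_{\mathfrak{g}_2})$: its purpose is not to make the DtN series ``linear'' in $\psi_k^{\mathfrak{g}_1}-\psi_k^{\mathfrak{g}_2}$.

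The paper avoids this obstacle by applying unique continuation at the \emph{eigenfunction} level, before any passage to the DtN map. Since $c_1=c_2$ in $N$ and $\lambda_k^{\mathfrak{g}_1}=\lambda_k^{\mathfrak{g}_2}=:\lambda_k$, the difference $u_k=\phi_k^{\mathfrak{g}_1}-\phi_k^{\mathfrak{g}_2}$ satisfies $(\Delta_{\mathfrak{g}_1}+\lambda_k)u_k=0$ in $N$ with $u_k=0$ on $\partial\Omega$; the quantitative unique continuation estimate from \cite{Ch22} then gives $\|u_k\|_{L^2(N)}\le Ce^{\lambda_k r_1}\|\psi_k^{\mathfrak{g}_1}-\psi_k^{\mathfrak{g}_2}\|_{L^\infty(\Sigma)}^{\alpha}\|u_k\|_{W^{1,\infty}(N)}^{1-\alpha}$. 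Interpolating to $H^{3/2+s}(N)$, invoking the H\"older a priori bound for eigenfunctions (Appendix), and taking the normal trace yields $\|\psi_k^{\mathfrak{g}_1}-\psi_k^{\mathfrak{g}_2}\|_{L^2(\partial\Omega)}\le Ce^{ck^{2/n}}\|\psi_k^{\mathfrak{g}_1}-\psi_k^{\mathfrak{g}_2}\|_{L^\infty(\Sigma)}^{\alpha(1-2s)/4}$. This is exactly where the exponent $\alpha(1-2s)/4$ in the definition of $\mathbf{D}$ comes from. One then splits $\mathfrak{D}=\sum_k\alpha_k\|\psi_k^{\mathfrak{g}_1}-\psi_k^{\mathfrak{g}_2}\|_{L^2(\partial\Omega)}$ at a cutoff $\ell$: the head is bounded by $Ce^{c\ell^{2/n}}\mathbf{D}$ via the inequality just obtained, the tail by $\ell^{-(1-2s)/(2n)}\mathbf{D}_0$ using only $\alpha_k\le\alpha_\ell$. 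Optimising in $\ell$ gives $\mathfrak{D}\le C\Psi(\mathbf{D}_0/\mathbf{D})$, and Theorem~\ref{theoremCSD} finishes. The equal-spectrum hypothesis is what makes $u_k$ solve a \emph{homogeneous} equation in $N$, so that the unique continuation inequality applies; that is its essential role.
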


We modify slightly the proof of Theorem \ref{theoremPSD} in order to derive 
a uniqueness result for the problem of determining the metric tensor 
$\mathfrak{g}$ from the partial Dirichlet-to-Neumann map 
$\Lambda_{\mathfrak{g}}{_{|\Sigma}}$. Set
\[
\tilde{\mathscr{C}}
= \left\{c\in C^\infty (\overline{\Omega});\;  c=1\; \mathrm{on}\; 
\partial \Omega\right\}.
\]

\begin{theorem}\label{theoremU}
Suppose that $(\overline{\Omega},\mathfrak{g}_0)$ is admissible. 
Let $\mathfrak{g}_j=c_j\mathfrak{g}_0$ with $c_j\in \tilde{\mathscr{C}}$, 
$j=1,2$. If $c_1=c_2$ in $N$ and $\Lambda_{\mathfrak{g}_1}(f){_{|\Sigma}}
=\Lambda_{\mathfrak{g}_2}(f){_{|\Sigma}}$.
Then $c_1=c_2$.
\end{theorem}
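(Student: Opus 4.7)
The plan is to follow the same scheme as the proof of Theorem \ref{theoremPSD}, replacing the spectral input by the partial Dirichlet-to-Neumann data and turning the quantitative stability estimate into a clean uniqueness statement. First, I would perform the standard conformal reduction. In dimension $n\ge 3$, the substitution $v=c_j^{(n-2)/4}u$ converts $\Delta_{c_j\mathfrak{g}_0}u=0$ into a Schr\"odinger equation $(-\Delta_{\mathfrak{g}_0}+q_j)v=0$ on $(\overline{\Omega},\mathfrak{g}_0)$, where $q_j$ is an explicit nonlinear expression of $c_j$ and its derivatives. Because $c_j\equiv 1$ on $\partial \Omega$, the outer normals with respect to $\mathfrak{g}_0$ and $c_j\mathfrak{g}_0$ coincide, $v|_{\partial\Omega}=u|_{\partial\Omega}$, and $\partial_\nu v-\partial_\nu u=\tfrac{n-2}{4}(\partial_\nu c_j)\,u|_{\partial\Omega}$. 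The assumption $c_1=c_2$ in $N$ forces $\partial_\nu c_1=\partial_\nu c_2$ on $\partial\Omega$, so the identity $\Lambda_{\mathfrak{g}_1}(f)|_\Sigma=\Lambda_{\mathfrak{g}_2}(f)|_\Sigma$ transforms verbatim into the equality on $\Sigma$ of the Dirichlet-to-Neumann maps $\Lambda_{q_1}$ and $\Lambda_{q_2}$ associated with $-\Delta_{\mathfrak{g}_0}+q_j$; moreover $q_1\equiv q_2$ on $N$.

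The core of the proof is then the partial-data Calder\'on-type uniqueness for Schr\"odinger operators on the admissible manifold $(\overline{\Omega},\mathfrak{g}_0)$. The equality of partial Dirichlet-to-Neumann maps on $\Sigma$, combined with $q_1-q_2\equiv 0$ on $N$, yields via Green's identity the orthogonality relation
\[
\int_\Omega (q_1-q_2)\,v_1 v_2\,dV_{\mathfrak{g}_0}=0
\]
for sufficiently many pairs of solutions $v_j$ of $(-\Delta_{\mathfrak{g}_0}+q_j)v_j=0$. Exploiting the admissibility $\mathfrak{g}_0=c_0(\mathfrak{e}\oplus\tilde{\mathfrak{g}})$, I would insert complex geometric optics solutions built from the limiting Carleman weight in the Euclidean factor and Gaussian beams concentrated along non-tangential geodesics of the simple transversal manifold $(\tilde{\mathcal{M}},\tilde{\mathfrak{g}})$. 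Inverting the resulting (attenuated) geodesic ray transform on $\tilde{\mathcal{M}}$, in the spirit of Dos Santos Ferreira--Kenig--Salo--Uhlmann and its partial-data refinements, delivers $q_1\equiv q_2$ on $\Omega$.

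Finally, both $\alpha_j:=c_j^{-(n-2)/4}$ solve $(-\Delta_{\mathfrak{g}_0}+q)\alpha=0$ with $\alpha|_{\partial\Omega}=1$, where $q:=q_1=q_2$. Since $0$ is not a Dirichlet eigenvalue of $A_{\mathfrak{g}_j}=-\Delta_{c_j\mathfrak{g}_0}$, the same conformal change shows that $0$ is not a Dirichlet eigenvalue of $-\Delta_{\mathfrak{g}_0}+q$, so this boundary value problem has at most one solution; hence $\alpha_1=\alpha_2$ and therefore $c_1=c_2$. The main obstacle is the partial-data uniqueness of the middle step: since $\Sigma$ is only required to have positive measure, some care is needed to extend the usual admissible-manifold CGO arguments (which typically assume $\Sigma$ open or all of $\partial\Omega$) to this level of generality, and this is precisely where the hypothesis $c_1=c_2$ in $N$ becomes essential, allowing one to push the integral identity from measurements on $\Sigma$ to all of $\Omega$.
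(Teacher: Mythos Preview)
Your conformal reduction and the final uniqueness step for $c_1^{-(n-2)/4}$ are fine, but the middle step is not actually carried out. From Green's identity you only get
\[
\int_\Omega (q_1-q_2)\,v_1 v_2\,dV_{\mathfrak{g}_0}
=\int_{\partial\Omega\setminus\Sigma} f\,\bigl(\Lambda_{q_1}f-\Lambda_{q_2}f\bigr)\,dS_{\mathfrak{g}_0},
\]
and you give no argument showing that the right-hand side vanishes when $\Sigma$ is merely a set of positive measure. Knowing $q_1=q_2$ on $N$ does not by itself kill this boundary term; some propagation mechanism is required, and the standard partial-data CGO arguments for admissible manifolds assume $\Sigma$ open (and satisfying geometric conditions), not just measurable. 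You flag this as ``the main obstacle'' but do not resolve it, so as written the proof has a genuine gap.

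The paper closes exactly this gap, and in doing so takes a shorter route than yours. Since $c_1=c_2$ on $N$, the difference $u=u_{\mathfrak{g}_1}(f)-u_{\mathfrak{g}_2}(f)$ solves $\Delta_{\mathfrak{g}_1}u=0$ in $N$ with $u=0$ on $\partial\Omega$ and $\partial_\nu u=0$ on $\Sigma$. The key input is the quantitative strong unique continuation estimate \cite[Theorem 1.1]{Ch22} (inequality \eqref{a7} in the present paper), valid for Cauchy data on a set of positive measure: it gives $\|u\|_{L^2(N)}\le C\|\partial_\nu u\|_{L^\infty(\Sigma)}^\alpha\|u\|_{W^{1,\infty}(N)}^{1-\alpha}$, hence $u\equiv 0$ in $N$ and therefore $\partial_\nu u=0$ on all of $\partial\Omega$. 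In other words, the partial DtN equality on $\Sigma$ upgrades to $\Lambda_{\mathfrak{g}_1}=\Lambda_{\mathfrak{g}_2}$ on the full boundary (the paper states the quantitative version \eqref{dn5}). At that point no partial-data CGO machinery is needed at all: one simply invokes the full-data stability of Caro--Salo (Theorem \ref{theoremCS}) to conclude $c_1=c_2$. Thus the tool you are missing is precisely the one that makes the whole CGO detour unnecessary.
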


\subsection{Partial boundary spectral data on an open subset}
\label{subsection1.3}

Theorem \ref{theoremPSD} can be improved when $\Sigma$ is substituted 
by a nonempty connected open subset $\Gamma$ of $\partial \mathcal{M}$. 
In this case the condition $\sigma (A_{\mathfrak{g}_1})
= \sigma (A_{\mathfrak{g}_2})$ in Theorem \ref{theoremPSD} becomes unnecessary.
 
Fix $\delta >0$ and consider the following new notations:
 \begin{align*}
 &\mathscr{D}(\mathfrak{g}_1,\mathfrak{g}_2)
= \sum_{k\ge 1}\left[ \left|\lambda_k^{\mathfrak{g}_2}
-\lambda_k^{\mathfrak{g}_1}\right|+\left\|\psi_k^{\mathfrak{g}_1}
-\psi_k^{\mathfrak{g}_2}\right\|_{L^2(\partial \mathcal{M})}\right],
\\
 &\mathscr{D}_0(\mathfrak{g}_1,\mathfrak{g}_2)
= \sum_{k\ge 1}\left[ \left|\lambda_k^{\mathfrak{g}_2}
-\lambda_k^{\mathfrak{g}_1}\right|+\left\|\psi_k^{\mathfrak{g}_1}
-\psi_k^{\mathfrak{g}_2}\right\|_{L^2(\Gamma)}\right],
\\
&\tilde{\mathscr{D}}_0(\mathfrak{g}_1,\mathfrak{g}_2)
= \sum_{k\ge 1}k^{\delta}\left\|\psi_k^{\mathfrak{g}_1}
-\psi_k^{\mathfrak{g}_2}\right\|_{L^2(\partial \mathcal{M})}.
\end{align*}
 
Let $\mathcal{N}$ be a neighborhood of $\partial \mathcal{M}$ in $\mathcal{M}$.
We assume that $\mathcal{N}$ is also $n$-dimensional connected compact 
manifold.
 
\begin{theorem}\label{theoremPSD2}
Assume that $(\mathcal{M}, \mathfrak{g}_0)$ is admissible. 
Let $\mathbf{C}_0>0$, $\mathfrak{g}_j=c_j\mathfrak{g}_0\in \mathscr{G}_1$, 
$j=1,2$ such that $c_1=c_2$ in $\mathcal{N}$, 
$\mathscr{D}_0=\mathscr{D}_0(\mathfrak{g}_1,\mathfrak{g}_2)<\infty$,  
$\tilde{\mathscr{D}}_0(\mathfrak{g}_1,\mathfrak{g}_2)\le \mathbf{C}_0$ and 
$\mathfrak{D}(\mathfrak{g}_1,\mathfrak{g}_2)\le e^{-\varrho}$. 
Then $\mathscr{D}(\mathfrak{g}_1,\mathfrak{g}_2)<\infty$ and
\[
\|c_1-c_2\|_{L^\infty(\mathcal{M})}\le C\left[\Phi_\tau\left(\mathscr{D}_0
\right)\right]^{-\theta},
\]
where the constant $C>0$ depends only on $n$, $\mathcal{M}$, $s$, 
$\mathfrak{g}_0$, $\mathbf{r}$, $\mathcal{N}$ and $\Gamma$, 
the constant $\tau$ depends only on 
$n$, $\mathcal{M}$, $s$, $\mathfrak{g}_0$, $\mathbf{r}$, $\mathbf{C}_0$, 
$\mathcal{N}$ and $\Gamma$, while $\varrho$ and  $\theta$ are 
as in Theorem \ref{theoremCSD}.
\end{theorem}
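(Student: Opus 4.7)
The plan is to follow the framework of Theorem~\ref{theoremPSD} while exploiting two features of the new hypotheses: the openness and connectedness of $\Gamma$, which enables Carleman/unique-continuation estimates inapplicable on a mere positive-measure set, and the additional bound $\tilde{\mathscr{D}}_0(\mathfrak{g}_1,\mathfrak{g}_2)\le \mathbf{C}_0$, whose $k^{\delta}$ weight is required to compensate for the loss of the hypothesis $\sigma(A_{\mathfrak{g}_1})=\sigma(A_{\mathfrak{g}_2})$ that was used in Theorem~\ref{theoremPSD}.

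The first step is to upgrade partial-boundary control $\mathscr{D}_0$ to full-boundary control $\mathscr{D}$. Since $c_1=c_2$ on $\mathcal{N}$, the function $v_k=\phi_k^{\mathfrak{g}_1}-\phi_k^{\mathfrak{g}_2}$ satisfies on $\mathcal{N}$
\[
-\Delta_{\mathfrak{g}_0}v_k-\lambda_k^{\mathfrak{g}_1}v_k=(\lambda_k^{\mathfrak{g}_1}-\lambda_k^{\mathfrak{g}_2})\phi_k^{\mathfrak{g}_2}.
\]
Because $\mathcal{N}$ is a connected $n$-dimensional collar and $\Gamma\subset\partial\mathcal{M}$ is a nonempty connected open set, a Carleman estimate on $\mathcal{N}$ combined with boundary-to-interior propagation produces an inequality of the form
\[
\|\psi_k^{\mathfrak{g}_1}-\psi_k^{\mathfrak{g}_2}\|_{L^2(\partial\mathcal{M})}\le e^{C\sqrt{\lambda_k^{\mathfrak{g}_1}}}\Bigl(\|\psi_k^{\mathfrak{g}_1}-\psi_k^{\mathfrak{g}_2}\|_{L^2(\Gamma)}+|\lambda_k^{\mathfrak{g}_1}-\lambda_k^{\mathfrak{g}_2}|\Bigr).
\]
Interpolating this exponential-in-$k$ bound against the polynomial decay $\|\psi_k^{\mathfrak{g}_1}-\psi_k^{\mathfrak{g}_2}\|_{L^2(\partial\mathcal{M})}\le \mathbf{C}_0 k^{-\delta}$ supplied by $\tilde{\mathscr{D}}_0\le \mathbf{C}_0$ then yields the finiteness of $\mathscr{D}(\mathfrak{g}_1,\mathfrak{g}_2)$ and a logarithmic majoration of $\mathscr{D}$ in terms of $\mathscr{D}_0$.

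With full-boundary control in hand, the argument proceeds along the lines of Subsection~\ref{subsection1.1}: via the kernel expansion of $\Lambda_{\mathfrak{g}}$ in terms of $\{(\lambda_k^{\mathfrak{g}},\psi_k^{\mathfrak{g}})\}$, together with a judicious high-frequency truncation, the norm of $\Lambda_{\mathfrak{g}_1}-\Lambda_{\mathfrak{g}_2}$ restricted to $\Gamma$ is controlled by a polynomial function of $\mathscr{D}$. Combining this with the logarithmic stability estimate for the conformal factor of an admissible metric from the partial Dirichlet-to-Neumann map on the open set $\Gamma$, which is the very step already invoked in Theorem~\ref{theoremPSD} (and whose applicability now does not require $\sigma(A_{\mathfrak{g}_1})=\sigma(A_{\mathfrak{g}_2})$ because the upgrade step above is unconditional), yields an estimate of the form $\|c_1-c_2\|_{L^\infty(\mathcal{M})}\le C[\Phi(\cdot)]^{-\theta}$ applied to the partial DtN distance. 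Chaining the three estimates and tuning $\tau$ so that $\Psi_\tau$ swallows the losses from the Carleman and truncation steps produces the announced modulus $C[\Phi_\tau(\mathscr{D}_0)]^{-\theta}$.

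The main obstacle is the uniform-in-$k$ Carleman/propagation step: one must track the $\sqrt{\lambda_k}$ dependence of the Carleman weight and balance it against the $k^{-\delta}$ decay coming from $\tilde{\mathscr{D}}_0$ so that the weighted series converges and ultimately produces the double-logarithmic rate encoded in $\Phi_\tau$. The dependence of $\tau$ on $\mathbf{C}_0$ in the statement reflects precisely this interpolation balance, while the remaining constants can be traced back to the admissibility assumption through the stability result for the DtN inverse problem.
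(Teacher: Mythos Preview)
Your architecture is right and matches the paper: propagate the data from $\Gamma$ to all of $\partial\mathcal{M}$ by unique continuation in the collar $\mathcal{N}$ (where the two metrics agree), control the high-frequency tail with $\tilde{\mathscr{D}}_0\le\mathbf{C}_0$, and then feed the resulting full-boundary quantity into Theorem~\ref{theoremCSD}. Two points, however, need correction.

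First, the pointwise bound you write, with a bare factor $e^{C\sqrt{\lambda_k}}$ and \emph{no additive a-priori term}, is not what elliptic Carleman or three-sphere inequalities deliver; those always carry a second term involving an a-priori norm of $u_k$. The paper uses exactly such a two-term estimate (Theorem~\ref{theorem2.2}, taken from \cite{BCKPS}): for $0<\epsilon<1$,
\[
C\|\partial_\nu u_k\|_{L^2(\partial\mathcal{M})}\le \epsilon^\alpha(1+\lambda_k^{\mathfrak{g}_1})\|u_k\|_{C^{1,\xi_n}(\mathcal{N})}+e^{\varkappa/\epsilon}\bigl(|\lambda_k^{\mathfrak{g}_1}-\lambda_k^{\mathfrak{g}_2}|+\|\psi_k^{\mathfrak{g}_1}-\psi_k^{\mathfrak{g}_2}\|_{L^2(\Gamma)}\bigr),
\]
combined with the H\"older a-priori bound $\|u_k\|_{C^{1,\xi_n}(\mathcal{N})}\le Ck^{\theta}$ from Appendix~\ref{A}. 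One then sums over $k\le\ell$, uses $\tilde{\mathscr{D}}_0$ for $k>\ell$, and chooses $\epsilon=\ell^{-\zeta}$ so that the a-priori contribution balances the tail $\ell^{-\delta}$; optimizing in $\ell$ gives $\mathscr{D}\le C\Psi_\tau(\mathscr{D}_0)$. Your interpolation idea is the right one, but it cannot be carried out without this extra $\epsilon$-dependent term, and the bound $\tilde{\mathscr{D}}_0\le\mathbf{C}_0$ does not give the termwise decay $\|\psi_k^{\mathfrak{g}_1}-\psi_k^{\mathfrak{g}_2}\|\le\mathbf{C}_0k^{-\delta}$ you invoke---only the tail estimate $\sum_{k>\ell}(\cdots)\le \ell^{-\delta}\mathbf{C}_0$.

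Second, the final step you describe---passing through a \emph{partial} DtN map on $\Gamma$ and invoking a partial-data stability result ``already invoked in Theorem~\ref{theoremPSD}''---is a misreading. Neither proof uses a partial elliptic DtN stability theorem; none is stated or available in this paper. In both cases, once the full-boundary spectral distance is controlled one simply notes $\mathfrak{D}(\mathfrak{g}_1,\mathfrak{g}_2)\le\mathscr{D}(\mathfrak{g}_1,\mathfrak{g}_2)$ (the weights satisfy $\alpha_k,\alpha_k^2\le1$) and applies Theorem~\ref{theoremCSD}, i.e., the stability through the \emph{full} DtN map (Theorem~\ref{theoremCS}). So after your upgrade step there is no need to return to $\Gamma$ at all.
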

 
\subsection{Comments}
 
Theorem \ref{theoremCSD} corresponds to a stability inequality for 
the uniqueness result \cite[Theorem 3.3]{KKL}, while Theorem \ref{theoremPSD} 
gives a stability inequality for the uniqueness result in \cite{Ch22}. To the best of our knowledge,
there are no other stability estimates in the literature concerning 
the determination of a Dirichlet-Laplace-Beltrami operator from its boundary 
spectral data.

The first uniqueness result for the mutidimensional Borg-Levinson theorem was 
obtained in \cite{NSU} and improved later in \cite{Is}. 
A generalization of the result in \cite{Is} was established in \cite{ChS}. 
The case of bounded potentials in Schr\"odinger equations 
in a periodic waveguide was considered in \cite{KKS}, 
where new ideas were introduced. 
The case of unbounded potentials was first studied by \cite{PS} and 
continued in \cite{BKMS,KS,Po}. While the  determination of the magnetic 
field in a magnetic Sch\"odinger equation from its boundary spectral data was 
treated in \cite{Ki1}.

A first stability inequality for the mutidimensional Borg-Levinson theorem was 
established in \cite{AS} and improved in \cite{ChS}. The case of partial 
boundary spectral data was studied in \cite{BCY,BCY2}. Stability inequalities 
for both the potential and the magnetic field were recently established in 
\cite{BCDKS} for a magnetic Sch\"odinger operator on a connected compact 
Riemannian manifold.
We can refer to \cite{Bel87,Bel92,BD,Ch09,CMS,KK, KOM,Po, So}, but here we do 
not intend any comprehensive references.

The two dimensional case was recently studied in \cite{IY} 
where the determination of the metric tensor from partial boundary spectral 
data on an arbitrary subboundary was considered. 
A logarithmic stability inequality was established for this problem.

\section{Proof of the theorems of Section \ref{section1}}

\subsection{Proof of Theorem \ref{theoremCSD}}

The scalar products of $L^2(\mathcal{M},dV_\mathfrak{g})$ 
and $L^2(\partial \mathcal{M},dS_\mathfrak{g})$ will be denoted 
hereinafter respectively by $(\cdot |\cdot )_\mathfrak{g}$ and 
$\langle \cdot |\cdot \rangle_\mathfrak{g}$. For $\lambda\ge 0$ and $f\in H^{\frac{1}{2}}(\partial \mathcal{M})$, 
we denote by $u_{\mathfrak{g}}^\lambda (f)\in H^1(\mathcal{M})$ 
the solution of the BVP
\[
(-\Delta _{\mathfrak{g}}+\lambda) u=0\; \mbox{in}\; \mathcal{M}\quad \mbox{and}\quad u=f\; \mbox{on}\; \partial \mathcal{M}.
\]

\begin{lemma}\label{lemma0}
Let $\lambda\ge 0$ and $f\in H^{\frac{1}{2}}(\partial \mathcal{M})$. Then we have
\begin{equation}\label{eq0}
u_{\mathfrak{g}}^\lambda(f)=-\sum_{k\ge 1}\frac{\langle f|\psi_k^{\mathfrak{g}}\rangle_\mathfrak{g}}{\lambda_k^{\mathfrak{g}}+\lambda}\phi_k^{\mathfrak{g}}
\end{equation}
and hence
\[
\sum_{k\ge 1}\frac{|\langle f|\psi_k^{\mathfrak{g}}\rangle_\mathfrak{g}|^2}{(\lambda_k^{\mathfrak{g}}+\lambda)^2}=\|u_{\mathfrak{g}}^\lambda(f)\|_{L^2(\mathcal{M})}^2.
\]
\end{lemma}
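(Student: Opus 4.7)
The plan is to compute the Fourier coefficients of $u=u_{\mathfrak{g}}^\lambda(f)$ in the orthonormal basis $(\phi_k^{\mathfrak{g}})$ of $L^2(\mathcal{M},dV_\mathfrak{g})$ via an integration-by-parts identity, and then read off both formulas of the lemma from the resulting expansion.

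First, I would fix $k\ge 1$ and use the weak formulation of the BVP. Since $u\in H^1(\mathcal{M})$ with trace $f\in H^{\frac{1}{2}}(\partial \mathcal{M})$ and $\phi_k^{\mathfrak{g}}\in H_0^1(\mathcal{M})$, testing $(-\Delta_\mathfrak{g}+\lambda)u=0$ against $\phi_k^{\mathfrak{g}}$ yields
\[
\int_{\mathcal{M}} \langle \nabla u, \nabla \phi_k^{\mathfrak{g}}\rangle_\mathfrak{g}\, dV_\mathfrak{g} + \lambda (u\,|\,\phi_k^{\mathfrak{g}})_\mathfrak{g}=0.
\]
On the other hand, because $\phi_k^{\mathfrak{g}}\in C^\infty(\mathcal{M})\cap H^2(\mathcal{M})$ and $u\in H^1(\mathcal{M})$, the standard Green identity (valid since $\psi_k^{\mathfrak{g}}\in C^\infty(\partial \mathcal{M})$ and $f\in H^{\frac{1}{2}}(\partial \mathcal{M})$ pair well) gives
\[
\int_{\mathcal{M}} \langle \nabla u, \nabla \phi_k^{\mathfrak{g}}\rangle_\mathfrak{g}\, dV_\mathfrak{g} = -\int_{\mathcal{M}} u\, \Delta_\mathfrak{g}\phi_k^{\mathfrak{g}}\, dV_\mathfrak{g} + \langle f\,|\,\psi_k^{\mathfrak{g}}\rangle_\mathfrak{g} = \lambda_k^{\mathfrak{g}}(u\,|\,\phi_k^{\mathfrak{g}})_\mathfrak{g}+ \langle f\,|\,\psi_k^{\mathfrak{g}}\rangle_\mathfrak{g},
\]
using $-\Delta_\mathfrak{g}\phi_k^{\mathfrak{g}}=\lambda_k^{\mathfrak{g}}\phi_k^{\mathfrak{g}}$ and $\phi_k^{\mathfrak{g}}=0$ on $\partial\mathcal{M}$. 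Comparing the two displays, I obtain
\[
(\lambda_k^{\mathfrak{g}}+\lambda)\,(u\,|\,\phi_k^{\mathfrak{g}})_\mathfrak{g} = -\langle f\,|\,\psi_k^{\mathfrak{g}}\rangle_\mathfrak{g}.
\]
Since $0$ is not in $\sigma(A_\mathfrak{g})$ and $\lambda\ge 0$, division by $\lambda_k^{\mathfrak{g}}+\lambda$ is legal for every $k$.

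With the Fourier coefficient identified, expanding $u$ in the orthonormal basis $(\phi_k^{\mathfrak{g}})$ of $L^2(\mathcal{M},dV_\mathfrak{g})$ gives \eqref{eq0}, with convergence in $L^2(\mathcal{M},dV_\mathfrak{g})$. Parseval's identity applied to this expansion then yields
\[
\|u_{\mathfrak{g}}^\lambda(f)\|_{L^2(\mathcal{M})}^2 = \sum_{k\ge 1} |(u\,|\,\phi_k^{\mathfrak{g}})_\mathfrak{g}|^2 = \sum_{k\ge 1} \frac{|\langle f\,|\,\psi_k^{\mathfrak{g}}\rangle_\mathfrak{g}|^2}{(\lambda_k^{\mathfrak{g}}+\lambda)^2},
\]
which is the second formula.

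There is no serious obstacle; the only point requiring minor care is justifying the Green identity for an $H^1$ solution with $H^{\frac{1}{2}}$ boundary trace, which is standard because the test function $\phi_k^{\mathfrak{g}}$ is smooth up to the boundary and vanishes there, so no duality pairing beyond the natural $H^{\frac{1}{2}}$–$H^{-\frac{1}{2}}$ one is needed, and $f\psi_k^{\mathfrak{g}}$ is in fact in $L^1(\partial\mathcal{M})$ via the boundary trace of $f$.
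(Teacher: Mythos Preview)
Your proof is correct and follows essentially the same approach as the paper: both compute the Fourier coefficient $(u_{\mathfrak{g}}^\lambda(f)\,|\,\phi_k^{\mathfrak{g}})_\mathfrak{g}$ via Green's formula, obtaining $-\langle f\,|\,\psi_k^{\mathfrak{g}}\rangle_\mathfrak{g}/(\lambda_k^{\mathfrak{g}}+\lambda)$, and then invoke the orthonormal expansion and Parseval. The only cosmetic difference is that you pass through the weak formulation first, whereas the paper applies Green's formula directly to move $-\Delta_\mathfrak{g}$ from $\phi_k^{\mathfrak{g}}$ onto $u$; the resulting identity and the rest of the argument are identical.
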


\begin{proof}
Using the identity
\begin{equation}\label{eq2}
u_{\mathfrak{g}}^\lambda(f)=\sum_{k\ge 1}(u_{\mathfrak{g}}^\lambda(f)|\phi_k^{\mathfrak{g}})_\mathfrak{g}\phi_k^{\mathfrak{g}}
\end{equation}
and Green's formula we get
\begin{align*}
(u_{\mathfrak{g}}^\lambda(f)|(\lambda_k^{\mathfrak{g}}+\lambda)\phi_k^{\mathfrak{g}})_\mathfrak{g}&=(u_{\mathfrak{g}}^\lambda(f)| -\Delta_{\mathfrak{g}}\phi_k^{\mathfrak{g}})_\mathfrak{g}+(u_{\mathfrak{g}}^\lambda(f)|\lambda\phi_k^{\mathfrak{g}})_\mathfrak{g}
\\
&= ((-\Delta_{\mathfrak{g}}+\lambda)u_{\mathfrak{g}}^\lambda(f)|\phi_k^{\mathfrak{g}})_\mathfrak{g}-\langle u_{\mathfrak{g}}^\lambda(f)|\partial_\nu \phi_k^{\mathfrak{g}}\rangle_\mathfrak{g}
\\
&=-\langle f|\partial_\nu \phi_k^{\mathfrak{g}}\rangle_\mathfrak{g} .
\end{align*}
That is we have
\[
(u_{\mathfrak{g}}^\lambda(f)|\phi_k^{\mathfrak{g}})_\mathfrak{g}=-\frac{\langle f|\psi_k^{\mathfrak{g}}\rangle_\mathfrak{g}}{\lambda_k^{\mathfrak{g}}+\lambda}.
\]
Hence \eqref{eq0} follows from \eqref{eq2}.
\end{proof}

\begin{lemma}\label{lemma1}
We have
\begin{equation}\label{e3}
\left\|\psi_k^{\mathfrak{g}}\right\|_{L^2(\partial \mathcal{M},dS_\mathfrak{g})}\le Ck^{\frac{3+2s}{2n}},\quad k\ge 1,\; \mathfrak{g}\in \mathscr{G}',
\end{equation}
where the constant $C>0$ only depends on $n$, $\mathcal{M}$, $\mathfrak{g}'$, $s$ and $\mathbf{m}$.
\end{lemma}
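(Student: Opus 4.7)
The plan is to combine three standard ingredients: interior-plus-boundary $H^2$ elliptic regularity for the eigenfunction equation, a Sobolev interpolation inequality to reach a fractional space $H^{3/2+s}$, and the classical trace theorem applied to the normal derivative. Together these will give a bound of the form $\|\psi_k^{\mathfrak{g}}\|_{L^2(\partial\mathcal{M})}\le C(1+\lambda_k^{\mathfrak{g}})^{3/4+s/2}$, and the exponent $(3+2s)/(2n)$ advertised in the lemma then comes out after invoking a Weyl-type upper bound $\lambda_k^{\mathfrak{g}}\le C k^{2/n}$ that holds uniformly over $\mathscr{G}'$.

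Concretely, I would start from the eigenvalue equation $-\Delta_{\mathfrak{g}}\phi_k^{\mathfrak{g}}=\lambda_k^{\mathfrak{g}}\phi_k^{\mathfrak{g}}$ with homogeneous Dirichlet data and the normalization $\|\phi_k^{\mathfrak{g}}\|_{L^2(\mathcal{M},dV_{\mathfrak{g}})}=1$. Standard elliptic regularity on the smooth compact manifold with boundary yields $\|\phi_k^{\mathfrak{g}}\|_{H^2(\mathcal{M})}\le C(1+\lambda_k^{\mathfrak{g}})$, with $C$ depending only on $\mathcal{M}$ and on the $W^{1,\infty}$ bounds for $\mathfrak{g}$ and $\mathfrak{g}^{-1}$ that are provided by the definition of $\mathscr{G}'$. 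I would then interpolate between $L^2$ and $H^2$ at level $r=\tfrac{3}{2}+s\in (\tfrac{3}{2},2)$ to get $\|\phi_k^{\mathfrak{g}}\|_{H^r(\mathcal{M})}\le C(1+\lambda_k^{\mathfrak{g}})^{r/2}=C(1+\lambda_k^{\mathfrak{g}})^{3/4+s/2}$. Since $r>\tfrac{3}{2}$, the map $u\mapsto \partial_\nu u|_{\partial\mathcal{M}}$ is continuous from $H^r(\mathcal{M})$ into $H^{r-3/2}(\partial\mathcal{M})=H^s(\partial\mathcal{M})\hookrightarrow L^2(\partial\mathcal{M})$, which immediately produces $\|\psi_k^{\mathfrak{g}}\|_{L^2(\partial\mathcal{M})}\le C(1+\lambda_k^{\mathfrak{g}})^{3/4+s/2}$.

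For the last step, a uniform Weyl bound $\lambda_k^{\mathfrak{g}}\le C k^{2/n}$ over $\mathscr{G}'$ follows from the min-max characterization: the hypotheses $\mathfrak{g}\ge \mathfrak{g}'$ and $\|\mathfrak{g}\|_{W^{2,\infty}(\mathcal{M})}\le \mathbf{m}$ force the Rayleigh quotient for $A_{\mathfrak{g}}$ to be comparable, with constants depending only on $n$, $\mathcal{M}$, $\mathfrak{g}'$, $\mathbf{m}$, to that for a fixed reference Laplace--Beltrami operator whose Weyl asymptotics are known. Substituting this bound back produces the right-hand side $C k^{(3+2s)/(2n)}$.

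The main technical obstacle is keeping all constants honestly uniform in $\mathfrak{g}\in \mathscr{G}'$. This amounts to three checks that are individually routine but need to be packaged together: the $H^2$ elliptic regularity constant for $\Delta_{\mathfrak{g}}$ depends only on two-sided $L^\infty$ bounds for $(\mathfrak{g}^{k\ell})$ and on $W^{1,\infty}$ bounds for the first-order coefficients produced by the divergence form, both of which are controlled by $\mathfrak{g}'$ and $\mathbf{m}$; the min-max comparison requires two-sided bounds on $(\mathfrak{g}^{k\ell})$ and $\sqrt{|\mathfrak{g}|}$, which again follow from the definition of $\mathscr{G}'$; and the equivalence between $\|\cdot\|_{L^2(\partial\mathcal{M},dS_{\mathfrak{g}})}$ and the same norm relative to a fixed reference surface measure is immediate once $\mathfrak{g}$ is uniformly pinched between $\mathfrak{g}'$ and $\mathbf{m}\,\mathrm{Id}$. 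Once these uniformities are verified, the proof reduces to the short chain of inequalities above.
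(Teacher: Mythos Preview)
Your proposal is correct and follows essentially the same route as the paper: $H^2$ elliptic regularity for $\phi_k^{\mathfrak{g}}$, interpolation to $H^{3/2+s}$, the trace theorem for $\partial_\nu$, and the uniform Weyl upper bound $\lambda_k^{\mathfrak{g}}\le Ck^{2/n}$ obtained via min-max. The only cosmetic differences are that the paper writes $\lambda_k^{\mathfrak{g}}$ rather than $1+\lambda_k^{\mathfrak{g}}$ in the intermediate bounds and cites the interpolation inequality in the explicit form $\|w\|_{H^{3/2+s}}\le C\|w\|_{H^2}^{(3+2s)/4}\|w\|_{L^2}^{(1-2s)/4}$, which is exactly your interpolation step with $\|\phi_k^{\mathfrak{g}}\|_{L^2}=1$.
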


\begin{proof}
In this proof $C>0$ denotes a generic constant depending only on $n$, $\mathcal{M}$, $s$, $\mathfrak{g}'$ and $\mathbf{m}$.
Let $\mathfrak{g}\in \mathscr{G}'$. It follows from the usual a priori estimate in $H^2$ that
\[
\|\phi^{\mathfrak{g}}\|_{H^2(\mathcal{M})}\le C\lambda_k^{\mathfrak{g}},\quad k\ge 1,
\]
which, combined with the interpolation inequality 
\[
\|w\|_{H^{\frac{3}{2}+s}(\mathcal{M})}\le C\|w\|_{H^2(\mathcal{M})}^{\frac{3+2s}{4}}\|w\|_{L^2(\mathcal{M})}^{\frac{1-2s}{4}},\quad w\in H^2(\mathcal{M})
\]
(e.g. \cite[Subsection 2.1 in page 40]{LM}), gives
\begin{equation}\label{e1}
\left\|\phi_k^{\mathfrak{g}}\right\|_{H^{\frac{3}{2}+s}(\mathcal{M})}\le C\left(\lambda_k^{\mathfrak{g}}\right)^{\frac{3+2s}{4}},\quad k\ge 1.
\end{equation}
From Weyl's asymptotic formula and the min-max principle we have
\begin{equation}\label{wf}
\varkappa^{-1}k^{\frac{2}{n}}\le \lambda_k^{\mathfrak{g}}\le \varkappa k^{\frac{2}{n}},
\end{equation}
where $\varkappa>1$ depends only on $n$, $\mathcal{M}$, $\mathfrak{g}'$ and $\mathbf{m}$. Then \eqref{e1} implies
\begin{equation}\label{e2}
\left\|\phi_k^{\mathfrak{g}}\right\|_{H^{\frac{3}{2}+s}(\mathcal{M})}\le Ck^{\frac{3+2s}{2n}},\quad k\ge 1.
\end{equation}
Using that the trace operator 
\begin{equation}\label{to}
\mathfrak{t}:w\in H^{\frac{3}{2}+s}(\mathcal{M})\mapsto \partial_\nu w\in L^2(\partial \mathcal{M})
\end{equation}
 is bounded (e.g. \cite[Theorem 9.4 in page 41]{LM}) we derive from \eqref{e2} 
\[
\left\|\psi_k^{\mathfrak{g}}\right\|_{L^2(\partial \mathcal{M},dS_\mathfrak{g})}\le Ck^{\frac{3+2s}{2n}},\quad k\ge 1,\; \mathfrak{g}\in \mathscr{G}'.
\]
This is the expected inequality.
\end{proof}

\begin{theorem}\label{theoremDN}
Let $\mathfrak{g}_1,\mathfrak{g}_2\in \mathscr{G}_0$ satisfying $\mathfrak{D}^+(\mathfrak{g}_1,\mathfrak{g}_2)<\infty$. Then $\Lambda_{\mathfrak{g}_1}-\Lambda_{\mathfrak{g}_2}$ extends to a bounded operator on $L^2(\partial \mathcal{M})$ and
\begin{equation}\label{a5}
\|\Lambda_{\mathfrak{g}_1}-\Lambda_{\mathfrak{g}_2}\|_{\mathscr{B}(L^2(\partial \mathcal{M}))}\le C\mathfrak{D}(\mathfrak{g}_1,\mathfrak{g}_2),
\end{equation}
where the constant $C>0$ depends only on $n$, $\mathcal{M}$, $s$, $\mathfrak{g}'$, $\mathfrak{g}_0$ and $\mathbf{m}$.
\end{theorem}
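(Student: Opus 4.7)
The plan is to combine the spectral representation of Lemma \ref{lemma0} at $\lambda=0$ with the eigenfunction trace bound of Lemma \ref{lemma1}: I would derive an identity for $\Lambda_{\mathfrak{g}}$ in dual form on $H^{1/2}(\partial\mathcal{M})$, subtract the two identities, split each summand telescopically, and estimate termwise so that the resulting bound is exactly $\mathfrak{D}(\mathfrak{g}_1,\mathfrak{g}_2)$. The bound, established on the dense subspace $H^{1/2}(\partial\mathcal{M})$, then extends to $L^2(\partial\mathcal{M})$ by density.

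Concretely, since $\mathfrak{g}_1=\mathfrak{g}_2$ on $\partial\mathcal{M}$ the boundary measure and outward normal coincide, so there is no ambiguity in the $L^2(\partial\mathcal{M})$ norm. For $f,\phi\in H^{1/2}(\partial\mathcal{M})$, applying Lemma \ref{lemma0} with $\lambda=0$ and Green's formula yields
\[
\langle \Lambda_{\mathfrak{g}}(f),\phi\rangle_{\mathfrak{g}}=-\sum_{k\ge1}\frac{\langle f|\psi_k^{\mathfrak{g}}\rangle_{\mathfrak{g}}\,\langle \phi|\psi_k^{\mathfrak{g}}\rangle_{\mathfrak{g}}}{\lambda_k^{\mathfrak{g}}},
\]
with absolute convergence coming from Cauchy--Schwarz applied to the identity of Lemma \ref{lemma0}. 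Subtracting this for $\mathfrak{g}_1,\mathfrak{g}_2$ and using the algebraic splitting
\begin{align*}
&\frac{\langle f|\psi_k^{\mathfrak{g}_1}\rangle\langle \phi|\psi_k^{\mathfrak{g}_1}\rangle}{\lambda_k^{\mathfrak{g}_1}}-\frac{\langle f|\psi_k^{\mathfrak{g}_2}\rangle\langle \phi|\psi_k^{\mathfrak{g}_2}\rangle}{\lambda_k^{\mathfrak{g}_2}}\\
&\qquad=\frac{\langle f|\psi_k^{\mathfrak{g}_1}-\psi_k^{\mathfrak{g}_2}\rangle\langle \phi|\psi_k^{\mathfrak{g}_1}\rangle+\langle f|\psi_k^{\mathfrak{g}_2}\rangle\langle \phi|\psi_k^{\mathfrak{g}_1}-\psi_k^{\mathfrak{g}_2}\rangle}{\lambda_k^{\mathfrak{g}_1}}\\
&\qquad\quad+\frac{\langle f|\psi_k^{\mathfrak{g}_2}\rangle\langle \phi|\psi_k^{\mathfrak{g}_2}\rangle(\lambda_k^{\mathfrak{g}_2}-\lambda_k^{\mathfrak{g}_1})}{\lambda_k^{\mathfrak{g}_1}\lambda_k^{\mathfrak{g}_2}},
\end{align*}
Cauchy--Schwarz together with Lemma \ref{lemma1} ($\|\psi_k^{\mathfrak{g}_j}\|_{L^2}\le Ck^{(3+2s)/(2n)}$) and Weyl's lower bound \eqref{wf} ($\lambda_k^{\mathfrak{g}_j}\ge ck^{2/n}$) give termwise estimates: the first two groups contribute at most $C\alpha_k\|\psi_k^{\mathfrak{g}_1}-\psi_k^{\mathfrak{g}_2}\|_{L^2}\|f\|_{L^2}\|\phi\|_{L^2}$ (since $k^{(3+2s)/(2n)}\cdot k^{-2/n}=\alpha_k$), while the third contributes $C\alpha_k^2|\lambda_k^{\mathfrak{g}_1}-\lambda_k^{\mathfrak{g}_2}|\|f\|_{L^2}\|\phi\|_{L^2}$ (since $k^{(3+2s)/n}\cdot k^{-4/n}=\alpha_k^2$), matching exactly the two pieces of $\mathfrak{D}(\mathfrak{g}_1,\mathfrak{g}_2)$. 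Summing in $k$ and using duality then yields \eqref{a5}.

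The main delicate point is justifying the termwise subtraction of the two spectral series: neither $\sum_k T_k^j$ with $T_k^j=\langle f|\psi_k^{\mathfrak{g}_j}\rangle\psi_k^{\mathfrak{g}_j}/\lambda_k^{\mathfrak{g}_j}$ converges absolutely in $L^2(\partial\mathcal{M})$, since Lemma \ref{lemma1} and Weyl only give $\|T_k^j\|_{L^2}\le C\beta_k\|f\|_{L^2}$ with $\sum_k\beta_k=+\infty$. This is presumably the role of the auxiliary assumption $\mathfrak{D}^+(\mathfrak{g}_1,\mathfrak{g}_2)<\infty$: the finer weight $\beta_k$ placed on the eigenvalue differences is precisely what is needed to legitimate the termwise regrouping above and to identify the resulting absolutely convergent series with $(\Lambda_{\mathfrak{g}_1}-\Lambda_{\mathfrak{g}_2})(f)$ in the distributional sense on $\partial\mathcal{M}$; once this is done, the density argument from $H^{1/2}$ to $L^2$ closes the proof.
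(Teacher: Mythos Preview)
Your termwise estimates for the telescoped difference are correct and match the paper's exactly, and you have correctly located the genuine difficulty: identifying the (absolutely convergent) series
\[
\sum_{k\ge 1}\left(\frac{\langle f|\psi_k^{\mathfrak{g}_1}\rangle}{\lambda_k^{\mathfrak{g}_1}}\psi_k^{\mathfrak{g}_1}-\frac{\langle f|\psi_k^{\mathfrak{g}_2}\rangle}{\lambda_k^{\mathfrak{g}_2}}\psi_k^{\mathfrak{g}_2}\right)
\]
with $(\Lambda_{\mathfrak{g}_1}-\Lambda_{\mathfrak{g}_2})(f)$. However, the mechanism you offer for this identification does not work. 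Your dual identity $\langle\Lambda_{\mathfrak{g}}f,\phi\rangle=-\sum_k\lambda_k^{-1}\langle f|\psi_k\rangle\langle\phi|\psi_k\rangle$ is in fact \emph{divergent} for generic $f,\phi\in H^{1/2}(\partial\mathcal{M})$: since $(u_{\mathfrak{g}}(f),\phi_k)_{\mathfrak{g}}=-\lambda_k^{-1}\langle f|\psi_k\rangle$, the series equals $\sum_k\lambda_k(u_{\mathfrak{g}}(f),\phi_k)(u_{\mathfrak{g}}(\phi),\phi_k)$, which would be the $H^1_0$ pairing of the two harmonic extensions; but these do \emph{not} lie in $H^1_0$, and indeed one checks (already in a one-dimensional model) that $\sum_k\lambda_k^{-1}|\langle f|\psi_k\rangle|^2=+\infty$. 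The Cauchy--Schwarz argument you invoke from Lemma~\ref{lemma0} only controls $\sum_k\lambda_k^{-2}|\langle f|\psi_k\rangle|^2$, which is one power of $\lambda_k$ short. So neither individual series can be written down, and you have not supplied any device to pass from the formal difference to the actual Dirichlet-to-Neumann difference.

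The paper fills this gap by an Isozaki-type limiting argument in an auxiliary spectral parameter $\lambda$. One first proves the rigorous identity $\mathfrak{t}u_{\mathfrak{g}}(f)-\mathfrak{t}u_{\mathfrak{g}}^\lambda(f)=\sum_k\frac{\lambda}{\lambda_k^{\mathfrak{g}}+\lambda}a_k^{\mathfrak{g}}$ (this series converges in $H^2$, hence the trace is justified), subtracts the two versions, and rearranges so that the left side is $(\Lambda_{\mathfrak{g}_1}-\Lambda_{\mathfrak{g}_2})(f)-\sum_k(a_k^{\mathfrak{g}_1}-a_k^{\mathfrak{g}_2})$ and the right side consists of $\lambda$-dependent remainders. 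One then sends $\lambda\to\infty$: the term $\mathfrak{t}u_{\mathfrak{g}_1}^\lambda-\mathfrak{t}u_{\mathfrak{g}_2}^\lambda$ vanishes by dominated convergence (redoing your telescoping with $\lambda_k$ replaced by $\lambda_k+\lambda$), and the cross term $\sum_k\big(\frac{\lambda}{\lambda+\lambda_k^{\mathfrak{g}_1}}-\frac{\lambda}{\lambda+\lambda_k^{\mathfrak{g}_2}}\big)a_k^{\mathfrak{g}_2}$ vanishes because $\|a_k^{\mathfrak{g}_2}\|\le C\beta_k\|f\|$ and $\sum_k\beta_k|\lambda_k^{\mathfrak{g}_1}-\lambda_k^{\mathfrak{g}_2}|<\infty$---this is precisely where $\mathfrak{D}^+(\mathfrak{g}_1,\mathfrak{g}_2)<\infty$ enters, not merely as a regrouping licence but as the quantitative input making a specific remainder go to zero. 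Your sketch names $\mathfrak{D}^+$ but omits this entire $\lambda\to\infty$ machinery, which is the heart of the proof.
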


\begin{proof}
In this proof $C>0$ denotes a generic constant only depending on $n$, $\mathcal{M}$, $s$, $\mathfrak{g}'$, $\mathfrak{g}_0$ and $\mathbf{m}$.
Since $dS_{\mathfrak{g}}=dS_{\mathfrak{g}_0}$ for each $\mathfrak{g}\in \mathscr{G}_0$, we use henceforth the notation $L^2(\partial \mathcal{M})$ (resp. $\langle \cdot |\cdot \rangle$) instead of $L^2(\partial \mathcal{M},dS_{\mathfrak{g}_0})$  (resp. $\langle \cdot |\cdot \rangle_{\mathfrak{g}_0}$). 
 
 Pick $\mathfrak{g}_1,\mathfrak{g}_2\in \mathscr{G}_0$ so that $\mathfrak{D}^+(\mathfrak{g}_1,\mathfrak{g}_2)<\infty$. Let $f\in H^{\frac{3}{2}}(\partial \mathcal{M})$  and set
\[
a_k^{\mathfrak{g}_1}=-\frac{\langle f|\psi_k^{\mathfrak{g}_1}\rangle}{\lambda_k^{\mathfrak{g}_1}}\psi_k^{\mathfrak{g}_1},\quad a_k^{\mathfrak{g}_2}=-\frac{\langle f|\psi_k^{\mathfrak{g}_2}\rangle}{\lambda_k^{\mathfrak{g}_2}}\psi_k^{\mathfrak{g}_2}.
\]
We split
\[
w_\ell=-\sum_{k\le \ell}\left(a_k^{\mathfrak{g}_1}- a_k^{\mathfrak{g}_2}\right),\quad \ell \ge 1,
\]
 into three terms: $w_\ell=w_\ell^1+w_\ell^2+w_\ell ^3$ with
\begin{align*}
&w_\ell^1= \sum_{k\le \ell} \left(\frac{1}{\lambda_k^{\mathfrak{g}_2}}-\frac{1}{\lambda_k^{\mathfrak{g}_1}}\right)\langle f|\psi_k^{\mathfrak{g}_2}\rangle\psi_k^{\mathfrak{g}_2},
\\
&w_\ell^2=\sum_{k\le \ell} \frac{\langle f|\psi^{\mathfrak{g}_2}_k\rangle-\langle f|\psi_k^{\mathfrak{g}_1}\rangle}{\lambda_k^{\mathfrak{g}_1}}\psi_k^{\mathfrak{g}_2},
\\
&w_\ell^3=\sum_{k\le \ell} \frac{\langle f|\psi_k^{\mathfrak{g}_1}\rangle}{\lambda_k^{\mathfrak{g}_1}}\left(\psi_k^{\mathfrak{g}_2}-\psi_k^{\mathfrak{g}_1}\right).
\end{align*}
Taking into account that $\mathfrak{D}(\mathfrak{g_1},\mathfrak{g}_2)\le \mathfrak{D}^+(\mathfrak{g_1},\mathfrak{g}_2)<\infty$,  we obtain  from \eqref{e3} and \eqref{wf}
\begin{align*}
&\left\|w_\ell^1\right\|_{L^2\left(\partial \mathcal{M}\right)}\le C\sum_{k\le \ell} \alpha_k^2\left|\lambda_k^{\mathfrak{g}_1}-\lambda_k^{\mathfrak{g}_2}\right|\le C\mathfrak{D}(\mathfrak{g_1},\mathfrak{g}_2) ,
\\
&\left\|w_\ell^2\right\|_{L^2\left(\partial \mathcal{M}\right)}+\left\|w_\ell^3\right\|_{L^2\left(\partial \mathcal{M}\right)} \le C\sum_{k\le \ell} \alpha_k \left\|\psi_k^{\mathfrak{g}_1}-\psi_k^{\mathfrak{g}_2}\right\|_{L^2(\partial \mathcal{M})}\le C\mathfrak{D}(\mathfrak{g_1},\mathfrak{g}_2).
\end{align*}
Hence the series  $\sum_{k\ge 1}\left(a_k^{\mathfrak{g}_1}-a_k^{\mathfrak{g}_2}\right)$ converges  in $L^2(\partial \mathcal{M})$. Furthermore, the following inequality holds
\begin{equation}\label{e4}
\left\| \sum_{k\ge 1}\left(a_k^{\mathfrak{g}_1}-a_k^{\mathfrak{g}_2}\right) \right\|_{L^2(\partial \mathcal{M})}\le C\mathfrak{D}(\mathfrak{g_1},\mathfrak{g}_2).
\end{equation}
For an arbitrary $g\in \mathscr{G}_0$ we have
\begin{equation}\label{a1}
u_{\mathfrak{g}}(f)-u_{\mathfrak{g}}^\lambda(f)=\sum_{k\ge 1}b_k\phi_k^{\mathfrak{g}},
\end{equation}
where
\[
b_k=-\frac{\lambda\langle f|\psi_k^{\mathfrak{g}}\rangle}{\lambda_k^{\mathfrak{g}}(\lambda_k^{\mathfrak{g}}+\lambda)}.
\]
Whence
\[
\left(\lambda_k^{\mathfrak{g}}\right)^2|b_k|^2=\frac{\lambda^2 \langle f|\psi_k^{\mathfrak{g}}\rangle|^2}{(\lambda_k^{\mathfrak{g}}+\lambda)^2}.
\]
In light of Lemma \ref{lemma0}, $\displaystyle \sum_{k\geq 1}\frac{\lambda^2|\langle f|\psi_k^{\mathfrak{g}}\rangle|^2}{(\lambda_k^{\mathfrak{g}}+\lambda)^2}<\infty$ and then the series in \eqref{a1} converges in $H^2(\mathcal{M})$.  Hence
\begin{equation}\label{a2}
\mathfrak{t} u_{\mathfrak{g}}(f)-\mathfrak{t} u_{\mathfrak{g}}^\lambda(f)=\sum_{k\ge 1}b_k\psi_k^{\mathfrak{g}}=\sum_{k\ge 1}\frac{\lambda}{\lambda_k^{\mathfrak{g}}+\lambda}a_k^\mathfrak{g}
\end{equation}
(the operator $\mathfrak{t}$ is defined in \eqref{to}) and the series in the right hand side converges in $L^2(\partial \mathcal{M})$.

The preceding calculations give
\begin{align}
\mathfrak{t} u_{\mathfrak{g}_1}(f)-\mathfrak{t} u_{\mathfrak{g}_2}(f)-&\sum_{k\ge 1}(a_k^{\mathfrak{g}_1}-a_k^{\mathfrak{g}_2})=\mathfrak{t} u_{\mathfrak{g}_1}^\lambda(f)-\mathfrak{t} u_{\mathfrak{g}_2}^\lambda (f)\label{a3}
\\
&+\sum_{k\ge 1}\left[\left(\frac{\lambda }{\lambda+\lambda_k^{\mathfrak{g}_1}}-1\right)a_k^{\mathfrak{g}_1}-\left(\frac{\lambda}{\lambda+\lambda_k^{\mathfrak{g}_2}}-1\right)a_k^{\mathfrak{g}_2}\right].\nonumber
\end{align}
Define
\[
a_k^{\mathfrak{g}_1}(\lambda)=-\frac{\langle f|\psi_k^{\mathfrak{g}}\rangle}{\lambda_k^{\mathfrak{g}_1}+\lambda}\psi_k^{\mathfrak{g}_1},\quad a_k^{\mathfrak{g}_2}(\lambda)=-\frac{\langle f|\psi_k^{\mathfrak{g}_2}\rangle}{\lambda_k^{\mathfrak{g}_2}+\lambda}\psi_k^{\mathfrak{g}_2},\quad \lambda \ge 0.
\]
As  for the case $\lambda=0$ we split
\[
w_\ell(\lambda)=-\sum_{k\le \ell}\left[a_k^{\mathfrak{g}_1}(\lambda)- a_k^{\mathfrak{g}_2}(\lambda)\right],\quad \ell \ge 1,
\]
 into three terms: $w_\ell(\lambda)=w_\ell^1(\lambda)+w_\ell^2(\lambda)+w_\ell ^3(\lambda)$, with
\begin{align*}
&w_\ell^1(\lambda)= \sum_{k\le \ell} \left(\frac{1}{\lambda_k^{\mathfrak{g}_2}+\lambda}-\frac{1}{\lambda_k^{\mathfrak{g}_1}+\lambda}\right)\langle f|\psi_k^{\mathfrak{g}_2}\rangle\psi_k^{\mathfrak{g}_2},
\\
&w_\ell^2(\lambda)=\sum_{k\le \ell} \frac{\langle f|\psi^{\mathfrak{g}_2}_k\rangle-\langle f|\psi_k^{\mathfrak{g}_1}\rangle}{\lambda_k^{\mathfrak{g}_1}+\lambda}\psi_k^{\mathfrak{g}_2},
\\
&w_\ell^3(\lambda)=\sum_{k\le \ell} \frac{\langle f|\psi_k^{\mathfrak{g}_1}\rangle}{\lambda_k^{\mathfrak{g}_1}+\lambda}\left(\psi_k^{\mathfrak{g}_2}-\phi_k^{\mathfrak{g}_1}\right).
\end{align*}
Taking into account the calculations we carried out above, we derive with the help of dominated convergence theorem that
\[
\lim_{\lambda \rightarrow \infty} \left\| \sum_{k\ge 1}\left[a_k^{\mathfrak{g}_1}(\lambda)-a_k^{\mathfrak{g}_2}(\lambda)\right]\right\|_{L^2(\partial \mathcal{M})}=0.
\]
That is we have
\begin{equation}\label{a4}
\lim_{\lambda \rightarrow \infty}\left\| \mathfrak{t} u_{\mathfrak{g}_1}^\lambda(f)-\mathfrak{t} u_{\mathfrak{g}_2}^\lambda (f)\right\|_{L^2(\partial \mathcal{M})}=0.
\end{equation}
Since
\[
\left\| \left(\frac{\lambda }{\lambda+\lambda_k^{\mathfrak{g}_1}}-1\right)(a_k^{\mathfrak{g}_1}-a_k^{\mathfrak{g}_2})\right\|_{L^2(\partial \mathcal{M})}\le 2 \left\|a_k^{\mathfrak{g}_1}-a_k^{\mathfrak{g}_2}\right\|_{L^2(\partial \mathcal{M})}
\]
we can apply again dominated convergence theorem  to derive 
\begin{equation}\label{e6}
\lim_{\lambda \rightarrow \infty}\left\|\sum_{k\ge 1}\left(\frac{\lambda }{\lambda+\lambda_k^{\mathfrak{g}_1}}-1\right)(a_k^{\mathfrak{g}_1}-a_k^{\mathfrak{g}_2})\right\|_{L^2(\partial \mathcal{M})}=0.
\end{equation}
On the other hand, the inequality  
\[
\left\|a_k^{\mathfrak{g}_2}\right\|_{L^2(\partial \mathcal{M})}\le C\beta_k\|f\|_{L^2(\partial \mathcal{M})}
\] 
yields  
\[
\left\|\left(\frac{\lambda }{\lambda+\lambda_k^{\mathfrak{g}_1}}-\frac{\lambda}{\lambda+\lambda_k^{\mathfrak{g}_2}}\right)a_k^{\mathfrak{g}_2}\right\|_{L^2(\partial \mathcal{M})}\le C\lambda ^{-1}\|f\|_{L^2(\partial \mathcal{M})}\beta_k\left|\lambda_k^{\mathfrak{g}_1}-\lambda_k^{\mathfrak{g}_2}\right|.
\]
This inequality together with $\mathfrak{D}^+(\mathfrak{g}_1,\mathfrak{g}_2)<\infty$ imply
\begin{equation}\label{e7}
\lim_{\lambda \rightarrow \infty}\left\|\sum_{k\ge 1}\left(\frac{\lambda }{\lambda+\lambda_k^{\mathfrak{g}_1}}-\frac{\lambda}{\lambda+\lambda_k^{\mathfrak{g}_2}}\right)a_k^{\mathfrak{g}_2}\right\|_{L^2(\partial \mathcal{M})}=0.
\end{equation}
Putting together \eqref{e6} and \eqref{e7} we obtain
\begin{equation}\label{e5}
\lim_{\lambda \rightarrow \infty}\left\| \sum_{k\ge 1}\left[\left(\frac{\lambda }{\lambda+\lambda_k^{\mathfrak{g}_1}}-1\right)a_k^{\mathfrak{g}_1}-\left(\frac{\lambda}{\lambda+\lambda_k^{\mathfrak{g}_2}}-1\right)a_k^{\mathfrak{g}_2}\right]\right\|_{L^2(\partial \mathcal{M})}=0.
\end{equation}
Finally, combining \eqref{a3}, \eqref{a4} and  \eqref{e4} we find
\[
\|(\Lambda_{\mathfrak{g}_1}-\Lambda_{\mathfrak{g}_2})(f)\|_{L^2(\partial \mathcal{M})}=\left\|\sum_{k\ge 1}(a_k^{\mathfrak{g}_1}-a_k^{\mathfrak{g}_2})\right\|_{L^2(\partial \mathcal{M})}\le C\mathfrak{D}(\mathfrak{g}_1,\mathfrak{g}_2)\|f\|_{L^2(\partial \mathcal{M})}.
\]
In other words, $\Lambda_{\mathfrak{g}_1}-\Lambda_{\mathfrak{g}_2}$ extends to a bounded operator on $L^2(\partial \mathcal{M})$ and \eqref{a5} holds.
\end{proof}

The proof of Theorem \ref{theoremDN} was inspired by that of  \cite[Theorem 1]{Ch21}.

Next, using the following inequality
\[
\|\Lambda_{\mathfrak{g}_1}-\Lambda_{\mathfrak{g}_2}\|_{\mathscr{B}(H^{\frac{1}{2}}(\partial \mathcal{M}),H^{-\frac{1}{2}}(\partial \mathcal{M}))}\le \|\Lambda_{\mathfrak{g}_1}-\Lambda_{\mathfrak{g}_2}\|_{\mathscr{B}(L^2(\partial \mathcal{M}))}
\]
and \eqref{a5} we get
\begin{equation}\label{a5.1}
\|\Lambda_{\mathfrak{g}_1}-\Lambda_{\mathfrak{g}_2}\|_{\mathscr{B}(H^{\frac{1}{2}}(\partial \mathcal{M}),H^{-\frac{1}{2}}(\partial \mathcal{M}))}\le C\mathfrak{D}(\mathfrak{g}_1,\mathfrak{g}_2),
\end{equation}
where $C>0$ only depends on $n$, $\mathcal{M}$, $s$, $\mathfrak{g}'$, $\mathfrak{g}_0$ and $\mathbf{m}$.

In light of \eqref{a5.1} Theorem \ref{theoremCSD} is a  direct consequence of \cite[Theorem 2]{CS}. For reader convenience we give the statement of slightly different version of \cite[Theorem 2]{CS}. The norm of $\mathscr{B}(H^{\frac{1}{2}}(\partial \mathcal{M}),H^{-\frac{1}{2}}(\partial \mathcal{M}))$ will be denoted by $\|\cdot \|_{\frac{1}{2}}$.

\begin{theorem}\label{theoremCS}
Assume that $\mathcal{M}$ is admissible. Let $\mathfrak{g}_j=c_j\mathfrak{g}_0\in \mathscr{G}_1$, $j=1,2$, satisfy $\|\Lambda_{\mathfrak{g}_1}-\Lambda_{\mathfrak{g}_2}\|_{1/2}\le e^{-\varrho}$. Then we have
\[
\|c_1-c_2\|_{L^\infty(\mathcal{M})}\le C\left[\Phi \left(\|\Lambda_{\mathfrak{g}_1}-\Lambda_{\mathfrak{g}_2}\|_{\frac{1}{2}}\right)\right]^{-\theta},
\]
where $C>0$ and $\varrho>0$ depend only on $n$, $\mathcal{M}$, $\mathfrak{g}_0$  and $\mathbf{c}$, while $\theta$ only depends on $n$. 
\end{theorem}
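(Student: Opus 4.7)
The plan is to follow the strategy of Dos Santos Ferreira--Kenig--Salo--Uhlmann \cite{DKSU} combined with the quantitative stability arguments typically used for stability of the Calder\'on problem. The first step I would carry out is to reduce the problem of recovering $c_j$ to that of recovering a potential in a Schr\"odinger-type equation on $(\mathcal{M},\mathfrak{g}_0)$. The conformal conjugation
\[
c^{\frac{n+2}{4}}(-\Delta_{c\mathfrak{g}_0})\bigl(c^{-\frac{n-2}{4}}\,\cdot\bigr)=-\Delta_{\mathfrak{g}_0}+q_c,\qquad q_c=c^{\frac{n-2}{4}}\Delta_{\mathfrak{g}_0}\bigl(c^{-\frac{n-2}{4}}\bigr),
\]
transfers $\Lambda_{\mathfrak{g}_j}$ to the DN map $\Lambda_{q_j}$ of $-\Delta_{\mathfrak{g}_0}+q_j$ (with $q_j=q_{c_j}$). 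Since $c_j=1$ on $\partial\mathcal{M}$, the boundary terms produced by conjugation are common to the two operators, so $\|\Lambda_{q_1}-\Lambda_{q_2}\|_{1/2}\lesssim \|\Lambda_{\mathfrak{g}_1}-\Lambda_{\mathfrak{g}_2}\|_{1/2}$.

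The next step is the standard Alessandrini identity: if $u_j\in H^1(\mathcal{M})$ solves $(-\Delta_{\mathfrak{g}_0}+q_j)u_j=0$ with boundary trace $f_j$, then
\[
\int_{\mathcal{M}}(q_1-q_2)\,u_1 u_2\,dV_{\mathfrak{g}_0}=\langle (\Lambda_{q_1}-\Lambda_{q_2})f_1,\,f_2\rangle.
\]
I would now plug in CGO solutions adapted to the admissible structure $\mathfrak{g}_0=c(\mathfrak{e}\oplus\tilde{\mathfrak{g}})$, as built in \cite{DKSU}. Parametrizing by a large frequency $\tau>0$ and by a unit speed geodesic $\gamma$ of the simple transversal manifold $(\tilde{\mathcal{M}},\tilde{\mathfrak{g}})$, these CGOs are of the form $e^{\pm\tau(x_1+i\varphi)}(a+r_\tau)$ with $\|r_\tau\|_{L^2}\to 0$, and their product concentrates along $\gamma$ modulated by oscillation in the Euclidean factor. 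Inserting them into the identity yields a bound of the form
\[
\bigl|\widehat{I_a(q_1-q_2)}(\xi,\gamma)\bigr|\,\le\, C e^{C\tau}\,\|\Lambda_{\mathfrak{g}_1}-\Lambda_{\mathfrak{g}_2}\|_{1/2}+\text{error},
\]
where $I_a$ denotes the attenuated geodesic ray transform on $\tilde{\mathcal{M}}$ and the hat stands for the Fourier transform in $x_1$ dual to $\tau$.

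In the third step I would invert the attenuated ray transform on the simple manifold $\tilde{\mathcal{M}}$ using Stefanov--Uhlmann-type stability, compose with Fourier inversion in the Euclidean variable, and reverse the conformal reduction, obtaining an estimate of the form
\[
\|c_1-c_2\|_{H^{-s}(\mathcal{M})}\,\le\, Ce^{C\tau}\,\|\Lambda_{\mathfrak{g}_1}-\Lambda_{\mathfrak{g}_2}\|_{1/2}+C\,\tau^{-\alpha}
\]
valid for all $\tau\ge \tau_0$, with the second (truncation) term controlled by the a priori $C^3$-bound built into $\mathscr{C}$. Optimizing $\tau$ as a function of $\|\Lambda_{\mathfrak{g}_1}-\Lambda_{\mathfrak{g}_2}\|_{1/2}$ produces a bound of the double-logarithmic shape $[\Phi(\|\Lambda_{\mathfrak{g}_1}-\Lambda_{\mathfrak{g}_2}\|_{1/2})]^{-\alpha'}$, and a final Sobolev interpolation between $H^{-s}$ and $C^3$ converts the negative-index estimate into the $L^\infty$ bound. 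The smallness hypothesis $\|\Lambda_{\mathfrak{g}_1}-\Lambda_{\mathfrak{g}_2}\|_{1/2}\le e^{-\varrho}$ is precisely what is needed so that the optimizing $\tau$ lies in the admissible range $\tau\ge\tau_0$. The main obstacle I foresee is the quantitative control of the CGO remainders and the stable inversion of the attenuated ray transform, both of which depend delicately on the simplicity of $(\tilde{\mathcal{M}},\tilde{\mathfrak{g}})$ and drive the value of the exponent $\theta$, which consequently depends only on $n$.
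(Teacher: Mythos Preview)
The paper does not prove this statement at all: Theorem~\ref{theoremCS} is simply quoted from \cite[Theorem~2]{CS} (Caro--Salo), and the paper uses it as a black box in combination with inequality~\eqref{a5.1} to deduce Theorem~\ref{theoremCSD}. So there is nothing to compare your argument against in the paper itself.

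That said, your outline is essentially the strategy of \cite{CS}: conformal reduction to a Schr\"odinger problem for $-\Delta_{\mathfrak{g}_0}+q$, Alessandrini's identity, CGO solutions adapted to the admissible product structure as in \cite{DKSU}, stable inversion of the attenuated geodesic ray transform on the simple transversal manifold, optimization in the large parameter $\tau$, and a final interpolation against the a~priori $C^3$ bound to pass from a weak norm to $L^\infty$. The double-logarithmic shape encoded by $\Phi$ is exactly what this scheme produces, and your remark that the exponent $\theta$ is dictated by the CGO remainder bounds and the ray-transform stability (hence depends only on $n$) is correct. In short, you have reconstructed the skeleton of the Caro--Salo proof; filling in the quantitative control of the CGO remainders in $H^1$ and the precise stability estimate for the attenuated ray transform on a simple manifold is nontrivial but is carried out in \cite{CS} and the references therein.
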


We mention that a stability inequality for the problem of determining the metric tensor at the boundary from a local Dirichlet-to-Neumann map  was established in \cite{KY}.

\subsection{Proof of Theorem \ref{theoremPSD}}

Pick $g\in \mathscr{G}'$ and $\lambda \ge 0$ and let $u\in W^{2,\infty}(N)$ be a solution of the following BVP
\begin{equation}\label{bvp1}
(\Delta_{\mathfrak{g}}+\lambda )u=0\; \mathrm{in}\; N\quad u=0\; \mathrm{on}\; \Gamma.
\end{equation}
From \cite[Theorem 1.1]{Ch22} it holds 
\begin{equation}\label{a7}
\|u\|_{L^2(N)}  \le Ce^{\lambda r_1}\|\partial_\nu u\|_{L^\infty(\Sigma)}^\alpha \|u\|_{W^{1,\infty}(N)}^{1-\alpha},
\end{equation}
where $C>0$, $0<r_1\le r_0$ and $0<\alpha<1$ only depend on $n$, $\Sigma$, $N$, $\mathfrak{g}'$ and $\mathbf{m}$.

From now on the notations and the assumptions are those of Subsection \ref{subsection1.2}.
$C>0$, $c>0$ and $c'>0$ will denote generic constants only depending on $n$, $\Sigma$, $N$, $\tilde{x}$, $r_0$, $\mathfrak{g}'$, $s$ and $\mathbf{m}$. The constants $r_1$ and $\alpha$ are as in \eqref{a7}.

Let $\mathfrak{g}_j=c_j\mathfrak{g}_0\in \mathscr{G}_1$, $j=1,2$, so that $c_1=c_2$ in $N$ and $\sigma(A_{\mathfrak{g}_1})=\sigma(A_{\mathfrak{g}_2})$. Then $u_k=\phi_k^{\mathfrak{g}_1}- \phi_k^{\mathfrak{g}_2}$, $k\ge 1$, is a solution of \eqref{bvp1} with $\lambda=\lambda_k^{\mathfrak{g}_1}=\lambda_k^{\mathfrak{g}_2}$. Applying \eqref{a7} with $u=u_k$, we get
\begin{equation}\label{a8}
\left\|\phi_k^{\mathfrak{g}_1}- \phi_k^{\mathfrak{g}_2}\right\|_{L^2(N)}  \le Ce^{\lambda_k^{\mathfrak{g}_1} r_1}\left\|\psi_k^{\mathfrak{g}_1}- \psi_k^{\mathfrak{g}_2}\right\|_{L^\infty(\Sigma)}^\alpha \left\|\phi_k^{\mathfrak{g}_1}- \phi_k^{\mathfrak{g}_2}\right\|_{W^{1,\infty}(N)}^{1-\alpha}.
\end{equation}
We obtain from the usual interpolation inequalities 
\[
\left\|\phi_k^{\mathfrak{g}_1}- \phi_k^{\mathfrak{g}_2}\right\|_{H^{\frac{3}{2}+s}(N)}\le C\left\|\phi_k^{\mathfrak{g}_1}- \phi_k^{\mathfrak{g}_2}\right\|_{H^2(N)}^{\frac{3+2s}{4}}\left\|\phi_k^{\mathfrak{g}_1}- \phi_k^{\mathfrak{g}_2}\right\|_{L^2(N)}^{\frac{1-2s}{4}}
\]
and hence
\begin{equation}\label{a9}
\|\phi_k^{\mathfrak{g}_1}- \phi_k^{\mathfrak{g}_2}\|_{H^{\frac{3}{2}+s}(N)}\le Ck^{\frac{3+2s}{2n}}\|\phi_k^{\mathfrak{g}_1}- \phi_k^{\mathfrak{g}_2}\|_{L^2(N)}^{\frac{1-2s}{4}}.
\end{equation}
Combining \eqref{a8}, \eqref{a9} and the Weyl's asymptotic formula we find
\[
\left\|\phi_k^{\mathfrak{g}_1}- \phi_k^{\mathfrak{g}_2}\right\|_{H^{\frac{3}{2}+s}(N)}\le Ce^{ck^{\frac{2}{n}}}\left\|\phi_k^{\mathfrak{g}_1}- \phi_k^{\mathfrak{g}_2}\right\|_{W^{1,\infty}(N)}^{\frac{(1-\alpha)(1-2s)}{4}}\left\|\psi_k^{\mathfrak{g}_1}- \psi_k^{\mathfrak{g}_2}\right\|_{L^\infty(\Sigma)}^{\frac{\alpha(1-2s)}{4}}.
\]
This and Theorem \ref{theorem2.1} in Appendix \ref{A} yield
\begin{equation}\label{a10.0}
\left\|\phi_k^{\mathfrak{g}_1}- \phi_k^{\mathfrak{g}_2}\right\|_{H^{\frac{3}{2}+s}(N)}\le Ce^{ck^{\frac{2}{n}}}\left\|\psi_k^{\mathfrak{g}_1}- \psi_k^{\mathfrak{g}_2}\right\|_{L^\infty(\Sigma)}^{\frac{\alpha(1-2s)}{4}}.
\end{equation}
Using the continuity of the trace operator $\mathfrak{t}$ defined in \eqref{to}, we get from \eqref{a10.0}
\begin{equation}\label{a10}
\left\|\psi_k^{\mathfrak{g}_1}- \psi_k^{\mathfrak{g}_2}\right\|_{L^2(\Gamma)}\le Ce^{ck^{\frac{2}{n}}}\left\|\psi_k^{\mathfrak{g}_1}- \psi_k^{\mathfrak{g}_2}\right\|_{L^\infty(\Sigma)}^{\frac{\alpha(1-2s)}{4}}.
\end{equation}

Next, assume that $\mathbf{D}=\mathbf{D}(\mathfrak{g}_1,\mathfrak{g}_2)<\infty$ and $\mathbf{D}_0=\mathbf{D}_0(\mathfrak{g}_1,\mathfrak{g}_2)<\infty$. For each integer $\ell\ge 1$ we have from \eqref{a10}
\[
\sum_{k\le \ell}\alpha_k\left\|\psi_k^{\mathfrak{g}_1}-\psi_k^{\mathfrak{g}_2}\right\|_{L^2(\partial \Omega)}\le Ce^{c\ell^{\frac{2}{n}}}\sum_{k\le \ell}\alpha_k\left\|\psi_k^{\mathfrak{g}_1}-\psi_k^{\mathfrak{g}_2}\right\|_{L^2(\Sigma)}^{\frac{\alpha(1-2s)}{4}}
\]
and hence
\[
\sum_{k\le \ell}\alpha_k\left\|\psi_k^{\mathfrak{g}_1}-\psi_k^{\mathfrak{g}_2}\right\|_{L^2(\partial \Omega)}\le Ce^{c\ell^{\frac{2}{n}}}\sum_{k\ge 1}\alpha_k\left\|\psi_k^{\mathfrak{g}_1}-\psi_k^{\mathfrak{g}_2}\right\|_{L^\infty(\Sigma)}^{\frac{\alpha(1-2s)}{4}}.
\]
This inequality, combined with the following one
\[
\sum_{k> \ell}\alpha_k\left\|\psi_k^{\mathfrak{g}_1}-\psi_k^{\mathfrak{g}_2}\right\|_{L^2(\partial \Omega)}\le \ell^{-\frac{1-2s}{2n}}\sum_{k\ge 1}\left\|\psi_k^{\mathfrak{g}_1}-\psi_k^{\mathfrak{g}_2}\right\|_{L^2(\partial \Omega)},
\]
yields
\begin{equation}\label{a11}
\mathfrak{D}=\sum_{k\ge 1}\alpha_k\left\|\psi_k^{\mathfrak{g}_1}-\psi_k^{\mathfrak{g}_2}\right\|_{L^2(\partial \Omega)}\le  Ce^{c\ell^{\frac{2}{n}}}\mathbf{D}+\ell^{-\frac{1-2s}{2n}}\mathbf{D}_0,
\end{equation}
where $\mathfrak{D}=\mathfrak{D}(\mathfrak{g}_1,\mathfrak{g}_2)=\mathfrak{D}^+(\mathfrak{g}_1,\mathfrak{g}_2)$ is defined Section \ref{subsection1.1}.

Pick $t>0$ and let $\ell=[t^{\frac{n}{2}}]+1$, where $[t^{\frac{n}{2}}]$ is the entire part of $t^{n/2}$. Then it follows from \eqref{a11}
\begin{equation}\label{a12}
C\mathfrak{D}\le e^{ct}\mathbf{D}+t^{-\eta}\mathbf{D}_0,
\end{equation}
where $\displaystyle \eta =\frac{1-2s}{4}$. If $\displaystyle \frac{\mathbf{D}_0}{\mathbf{D}}\le 1$ then \eqref{a12} with $t=1$ gives
\begin{equation}\label{a13}
\mathfrak{D}\le C\mathbf{D}.
\end{equation}
Suppose now that $\displaystyle \frac{\mathbf{D}_0}{\mathbf{D}}> 1$. We then  choose $t$ in \eqref{a12} is such a way that $\displaystyle t^{\eta}e^{ct}= \frac{\mathbf{D}_0}{\mathbf{D}}$. In that case $\displaystyle e^{c't}>\frac{\mathbf{D}_0}{\mathbf{D}}$ and hence $\displaystyle c't>\ln \frac{\mathbf{D}_0}{\mathbf{D}}$.  This inequality in \eqref{a12} gives 
\begin{equation}\label{a14}
\mathfrak{D}\le C \left[\ln \frac{\mathbf{D}_0}{\mathbf{D}}\right]^{-\eta}.
\end{equation}
In light of \eqref{a13} and \eqref{a14} we can assert that
\[
\mathfrak{D}\le C\Psi \left(\frac{\mathbf{D}_0}{\mathbf{D}}\right).
\]
The proof is completed by applying Theorem \ref{theoremCSD}.

\subsection{Proof of Theorem \ref{theoremU}}

We reuse the notations and the assumptions of Subsection \ref{subsection1.2}.

Let $\mathfrak{g}_1,\mathfrak{g}_2\in \mathscr{G}'$ satisfying $\mathfrak{g}_1=\mathfrak{g}_2$ in $N$. Fix $p>n$ and pick $f\in W^{2-\frac{1}{p},p}(\partial \Omega)$ satisfying $\|f\|_{W^{2-\frac{1}{p},p}(\partial \Omega)}\le 1$. From the usual a priori estimate in $H^2$ together with \cite[Theorem 2.3.3.6]{Gr} we get
\begin{equation}\label{dn0}
\|u_{\mathfrak{g}_j}(f)\|_{W^{2,p}(\Omega)}\le C_0,\quad j=1,2,
\end{equation}
where the constant $C_0>0$ only depends on $n$, $\Omega$, $\mathfrak{g}'$, $\mathbf{m}$ and $p$.
As $W^{2,p}(\Omega)$ is continuously embedded in $C^{1,1-\frac{n}{p}}(\overline{\Omega})$, \eqref{dn0} implies
\begin{equation}\label{dn1}
\|u_{\mathfrak{g}_j}(f)\|_{C^{1,1-\frac{n}{p}}(\overline{\Omega})}\le C_0,\quad j=1,2.
\end{equation}

Also, using that $u=u_{\mathfrak{g}_1}(f)-u_{\mathfrak{g}_2}(f)$ is the solution of the BVP \eqref{bvp1} with $\mathfrak{g}=\mathfrak{g}_1$ and $\lambda=0$, we obtain from \eqref{a7}
\begin{equation}\label{dn2}
\|u\|_{L^2(N)}\le C_1\|\partial_\nu u\|_{L^\infty (\Sigma)}^\alpha\|u\|_{W^{1,\infty}(N)}^{1-\alpha},
\end{equation}
where $C_1>0$ and $0<\alpha<1$ only depend on $n$, $\Sigma$, $N$, $\mathfrak{g}'$ and $\mathbf{m}$. Then, inequality \eqref{dn1} in \eqref{dn2} gives
\begin{equation}\label{dn3}
\|u\|_{L^2(N)}\le C_2\|\partial_\nu u\|_{L^\infty (\Sigma)}^\alpha,
\end{equation}
where $C_2>0$  only depends on $n$, $N$, $\Sigma$, $\mathfrak{g}'$, $\mathbf{m}$ and $p$, and $\alpha$ is as in \eqref{dn2}.

In light of the following interpolation inequality 
\[
\|u\|_{H^{3/2+s}(N)}\le  \varkappa \|u\|_{H^2(N)}^{\frac{3+2s}{4}}\|u\|_{L^2(N)}^{\frac{1-2s}{4}},
\]
where $\varkappa >0$ depends only on $n$, $N$ and $\epsilon$, \eqref{dn0}, \eqref{dn3} and the fact that $W^{2,p}(\Omega)$ is continuously embedded in $H^2(\Omega)$, we obtain
\begin{equation}\label{dn4}
\|u\|_{H^{\frac{3}{2}+s}(N)} \le C\|\partial_\nu u\|_{L^\infty (\Sigma)}^{\frac{\alpha(1-2s)}{4}},
\end{equation}
Here and until the end of this subsection $C>0$ denotes a generic constant  depending only on $n$, $N$, $\Sigma$, $\mathfrak{g}'$, $\mathbf{m}$, $s$ and $p$, and $\alpha$ is as in \eqref{dn2}.

Finally, \eqref{dn4} combined with the trace operator $\mathfrak{t}$ defined in \eqref{to} yields
\[
\|\partial_\nu u\|_{L^2(\Gamma )} \le C\|\partial_\nu u\|_{L^\infty (\Sigma)}^\beta,
\]
where $\displaystyle \beta=\frac{\alpha(1-2s)}{4}$.

Let $\mathbf{B}$ denotes the unit ball $W^{2-\frac{1}{p},p}(\partial \Omega)$. As $f$ was taken arbitrary in $\mathbf{B}$ we end up getting
\begin{equation}\label{dn5}
\| \Lambda_{\mathfrak{g}_1}-\Lambda_{\mathfrak{g}_2}\|_{\mathscr{B}(W^{2-\frac{1}{p},p}(\partial \Omega), L^2(\partial \Omega))}\le C\sup_{f\in \mathbf{B}}\| \Lambda_{\mathfrak{g}_1}(f){_{|\Sigma}}-\Lambda_{\mathfrak{g}_2}(f){_{|\Sigma}}\|_{L^\infty (\Sigma)}^\beta .
\end{equation}

Theorem \ref{theoremU} follows then by putting together Theorem \ref{theoremCS} and \eqref{dn5}.

\subsection{Proof of Theorem \ref{theoremPSD2}}

We use the same notations and assumptions as in Subsection \ref{subsection1.3}. For fixed $\sigma >n$, let $\displaystyle \xi_n=\xi_n(\sigma)=1-\frac{n}{\sigma}$ if $n\ne 5$, $\displaystyle \xi_5=\frac{1}{2}$ and
\[
\gamma_n=
\left\{
\begin{array}{lll}
2, &n=3,4,5,
\\
k+1,\quad &n=4k,\; k\ge 2\quad \mathrm{or}\quad n=2k+1,\; k\ge 3,
\\
k+2 &n=4k+2,\; k\ge 2.
\end{array}
\right.
\]

The following result is contained in  \cite[Theorem A.2]{BCKPS}.

\begin{theorem}\label{theorem2.2}
For any $0<\epsilon <1$, $\mathfrak{g}\in \mathscr{G}'$, $\lambda \ge 0$ and $u\in H^2(\mathcal{N})$ satisfying $u=0$ in $\partial \mathcal{M}$, we have
\begin{align*}
C\|\partial _\nu u\|_{L^2(\partial \mathcal{N} )}\le \epsilon ^\alpha(1+\lambda)&\|u\|_{C^{1,\xi_n}(\mathcal{N})}
\\ 
&+e^{\frac{\varkappa}{\epsilon} } \left( \|(\Delta_{\mathfrak{g}} +\lambda )u\|_{L^2(\mathcal{N})}+\|\partial _\nu u\|_{L^2(\Gamma )}\right),
\end{align*}
where the constant $C>0$, $\alpha$ and $\varkappa$ only depends on $n$, $\sigma$, $\mathcal{M}$, $\mathcal{N}$, $\mathfrak{g}'$, $\mathbf{c}$ and $\Gamma_0$.
\end{theorem}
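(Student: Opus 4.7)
My plan is to derive Theorem \ref{theorem2.2} as a quantitative Cauchy-type unique continuation estimate whose engine is a boundary Carleman estimate for $-\Delta_\mathfrak{g} + \lambda$ in the collar $\mathcal{N}$, combined with a chain-of-balls propagation from $\Gamma$ to the remainder of $\partial \mathcal{N}$. The Dirichlet condition $u=0$ on $\partial\mathcal{M}$ is essential: it kills the Dirichlet trace in every boundary integral arising from integration by parts, so that only the normal derivative of $u$ survives on $\partial\mathcal{N}$, which is exactly the quantity being estimated on the left-hand side.

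First I would construct a $C^\infty$ weight $\varphi$ on $\mathcal{N}$ that is Hörmander pseudo-convex for $-\Delta_\mathfrak{g}$, with $\partial_\nu \varphi$ positive on $\Gamma$ and on $\partial \mathcal{N}\cap \mathrm{int}(\mathcal{M})$ (the components where a \emph{loss} is admissible) and negative on the portion of $\partial\mathcal{M}$ where we \emph{gain} control. Uniformity over $\mathfrak{g}\in\mathscr{G}'$ is ensured by the ellipticity bound $\mathfrak{g}\ge \mathfrak{g}'$ and the $W^{2,\infty}$ bound $\mathbf{m}$. With large Carleman parameter $\tau>0$, a standard boundary Carleman estimate, after using $u=0$ on $\partial\mathcal{M}$ to drop the Dirichlet traces, yields an inequality of the form
\[
\|\partial_\nu u\|_{L^2(\partial\mathcal{N})}^2 \le e^{C\tau}\bigl(\|\partial_\nu u\|_{L^2(\Gamma)}^2 + \|(\Delta_\mathfrak{g}+\lambda)u\|_{L^2(\mathcal{N})}^2\bigr) + e^{-\tau}(1+\lambda)^2\,\|u\|_{H^{2}(\mathcal{N})}^2,
\]
where the $(1+\lambda)^2$ factor absorbs the $\lambda u$ contribution that is treated as a source.

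Next I would upgrade the bulk $H^2$ norm on the right to the $C^{1,\xi_n}$ norm appearing in the statement. The choice of the dimension-dependent exponent $\xi_n$ and of the auxiliary integer $\gamma_n$ is dictated by iterating interior elliptic regularity $\gamma_n$ times and applying the Sobolev embedding $W^{\gamma_n,\sigma}\hookrightarrow C^{1,\xi_n}$ in dimension $n$; the anomalous value $\xi_5=1/2$ reflects the critical embedding in dimension $5$, while the even-odd dichotomy for $\gamma_n$ reflects how many derivatives of the equation need to be controlled to reach the Hölder target. Finally, setting $\epsilon = e^{-c\tau}$ for a suitable $c>0$ converts the $(e^{C\tau},e^{-\tau})$ trade-off into the $(e^{\varkappa/\epsilon},\epsilon^\alpha)$ trade-off stated in the theorem, with the exponent $\alpha$ and the constant $\varkappa$ determined by the Carleman constants.

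The main obstacle is the propagation from $\Gamma$ across $\mathcal{N}$: a single Carleman weight cannot in general be constructed globally on $\mathcal{N}$ with the required sign conditions, because the pseudo-convexity condition imposes a bound on the diameter of the region. One has to cover $\mathcal{N}$ by a \emph{finite} chain of geodesic balls starting at $\Gamma$ and apply a local three-ball inequality on each link, interpolating the estimates so that the cumulative loss remains of the form $e^{\varkappa/\epsilon}$. The key uniformity to track is that the number of balls, and hence the constant $\varkappa$, depends only on the geometry of $(\mathcal{M},\mathfrak{g}_0)$, on $\mathcal{N}$ and on $\Gamma$, and neither on $\lambda$ nor on the particular $\mathfrak{g}\in\mathscr{G}'$. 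This is precisely the quantitative refinement of the classical Cauchy uniqueness theorem carried out in \cite[Theorem A.2]{BCKPS}.
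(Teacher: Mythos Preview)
The paper does not prove this theorem: it is quoted verbatim with the sentence ``The following result is contained in \cite[Theorem A.2]{BCKPS}'' and then used as a black box in the proof of Theorem~\ref{theoremPSD2}. You correctly identify this source in your last paragraph, and the architecture you sketch---a boundary Carleman estimate for $-\Delta_{\mathfrak{g}}$ with a pseudo-convex weight, a chain-of-balls propagation from $\Gamma$ across the collar $\mathcal{N}$, and the reparametrisation $\epsilon=e^{-c\tau}$---is indeed the standard route to such quantitative Cauchy estimates and is consistent with the argument in \cite{BCKPS}.

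One step in your outline is garbled. You say you would ``upgrade the bulk $H^2$ norm on the right to the $C^{1,\xi_n}$ norm'' by iterating elliptic regularity and invoking the embedding $W^{\gamma_n,\sigma}\hookrightarrow C^{1,\xi_n}$. That runs in the wrong direction: $\|u\|_{C^{1,\xi_n}}$ does not dominate $\|u\|_{H^2}$, and the embeddings you cite produce H\"older control \emph{from} higher Sobolev regularity, not the converse. In the actual Carleman/three-ball machinery the residual a~priori term is of lower order (an $H^1$- or $L^\infty$-type norm of $u$), which is then trivially bounded by $\|u\|_{C^{1,\xi_n}(\mathcal{N})}$. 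The specific exponent $\xi_n$ and the integer $\gamma_n$ are not needed to close the Carleman argument at all; they are chosen so that the right-hand side of Theorem~\ref{theorem2.2} matches the eigenfunction bound of Theorem~\ref{theorem2.1}, which is what gets plugged in when the result is applied to $u_k=\phi_k^{\mathfrak{g}_1}-\phi_k^{\mathfrak{g}_2}$.
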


In the sequel $C>0$ will denote a generic constant depending only on $n$, $\sigma$, $\mathcal{M}$, $\mathcal{N}$, $\mathfrak{g}'$, $\mathbf{c}$, $\mathbf{C}_0$ and $\Gamma_0$. Let $\mathfrak{g}_j=c_j\mathfrak{g}_0\in \mathscr{G}_1$, $j=1,2$, with $c_1=c_2$ in $\mathcal{N}$. 
Assume that  
$\mathscr{D}_0=\mathscr{D}_0(\mathfrak{g}_1,\mathfrak{g}_2)<\infty$ and  $\tilde{\mathscr{D}}_0=\tilde{\mathscr{D}}_0(\mathfrak{g}_1,\mathfrak{g}_2)\le \mathbf{C}_0$. We easily check that $u_k=\phi_k^{\mathfrak{g}_1}-\phi_k^{\mathfrak{g}_2}\in H_0^1(\mathcal{M})$, $k\ge 1$, satisfies
\[
\Delta_{\mathfrak{g}_1}u_k+\lambda_k^{\mathfrak{g}_1}u_k=(\lambda_k^{\mathfrak{g}_2}-\lambda_k^{\mathfrak{g}_1})\phi_k^{\mathfrak{g}_2}\quad \rm{in}\; \mathcal{N}.
\]
Let $0<\epsilon<1$. Applying Theorem \ref{theorem2.1} (Appendix \ref{A}) and  Theorem \ref{theorem2.2} with $u=u_k$, we find
\begin{align*}
C\left\|\psi_k^{\mathfrak{g}_1}-\psi_k^{\mathfrak{g}_2}\right\|_{L^2(\partial \mathcal{M} )}&\le \epsilon^\alpha k^{\theta}
\\ 
&+e^{\frac{\varkappa}{\epsilon}} \left( \left|\lambda_k^{\mathfrak{g}_2}-\lambda_k^{\mathfrak{g}_1}\right|+\left\|\psi_k^{\mathfrak{g}_1}-\psi_k^{\mathfrak{g}_2}\right\|_{L^2(\Gamma_0 )}\right),
\end{align*}
where $\displaystyle \theta=\theta_n=\frac{2\gamma_n}{n}$. Therefore for every $\ell> 1$ we get
\begin{align*}
&C\sum_{k\le \ell}\left\|\psi_k^{\mathfrak{g}_1}-\psi_k^{\mathfrak{g}_2}\right\|_{L^2(\partial \mathcal{M} )}\le \epsilon^\alpha \ell^{1+\theta}
\\ 
&\hskip 3.5cm +e^{\frac{\varkappa}{\epsilon}} \sum_{k\le \ell}\left( \left|\lambda_k^{\mathfrak{g}_2}-\lambda_k^{\mathfrak{g}_1}\right|+\left\|\psi_k^{\mathfrak{g}_1}-\psi_k^{\mathfrak{g}_2}\right\|_{L^2(\Gamma_0 )}\right)
\end{align*}
and hence
\begin{equation}\label{z1}
C\sum_{k\le \ell}\left\|\psi_k^{\mathfrak{g}_1}-\psi_k^{\mathfrak{g}_2}\right\|_{L^2(\partial \mathcal{M} )}\le \epsilon^\alpha \ell^{1+\theta}
+e^{\frac{\varkappa}{\epsilon}} \mathscr{D}_0.
\end{equation}
On the other hand we have
\begin{equation}\label{z2}
\sum_{k> \ell}\left\|\psi_k^{\mathfrak{g}_1}-\psi_k^{\mathfrak{g}_2}\right\|_{L^2(\partial \mathcal{M} )}\le \ell^{-\delta}\sum_{k> \ell}k^{\delta}\left\|\psi_k^{\mathfrak{g}_1}-\psi_k^{\mathfrak{g}_2}\right\|_{L^2(\partial \mathcal{M} )}\le \ell^{-\delta}\mathbf{C}_0.
\end{equation}
Putting together \eqref{z1} and \eqref{z2} we get
\begin{equation}\label{z3}
C\mathscr{D}\le \epsilon^\alpha \ell^{1+\theta}
+e^{\frac{\varkappa}{\epsilon}} \mathscr{D}_0 +\ell^{-\delta}.
\end{equation}
Taking $\epsilon=\ell^{-\zeta}$, with $\zeta=(1+\theta+\delta)\alpha$, in this inequality we obtain 
\begin{equation}\label{z4}
C\mathscr{D}\le 
e^{\varkappa \ell^\zeta} \mathscr{D}_0 +\ell^{-\delta}.
\end{equation}
If $t>0$ then $\ell =[t^{\frac{1}{\zeta}}]+1$  in \eqref{z4} yields, upon modifying if necessary $\varkappa$,
\[
C\mathscr{D}\le 
e^{\varkappa t} \mathscr{D}_0 +t^{-\sigma},
\]
where $\displaystyle \sigma=\frac{\delta}{\zeta}$.
We complete the proof similarly to that of Theorem \ref{theoremPSD}.

\section{Stability in relationship with the hyperbolic Dirichlet-to-Neumann map}

We include in this section manifolds of dimension two. We begin by defining the hyperbolic Dirichlet-to-Neumann map. For $\mathfrak{g}\in \mathscr{G}$ and $T>0$ consider the following IBVP 
\begin{equation}\label{w1}
\left\{
\begin{array}{ll}
(\partial_t^2-\Delta_\mathfrak{g})u=0\quad \mathrm{in}\; (0,T)\times \mathcal{M},
\\
u(0,\cdot)=\partial_tu(0,\cdot)=0,
\\
u_{|(0,T)\times \partial \mathcal{M}}=\varphi
\end{array}
\right.
\end{equation}
and set
\[
H_{0,}^1((0,T)\times \partial \mathcal{M})=\{\varphi \in H^1((0,T)\times \partial \mathcal{M});\; \varphi(0,\cdot)=0\}.
\]

Let $\varphi \in H_{0,}^1((0,T)\times \partial \mathcal{M})$.  From \cite[Theorem 2.30]{KKL} the IBVP \eqref{w1} admits a unique solution 
\[
u^{\mathfrak{g}}(\varphi)\in C^1([0,T],L^2(\mathcal{M}))\cap C([0,T],H^1(\mathcal{M})).
\]
 According to \cite[3.2.1, 3.2.2]{KKL} this solution is given by the formula
\begin{equation}\label{w2}
u^{\mathfrak{g}}(\varphi)(t)=\sum_{k\ge 1}\left(\int_0^t \langle \varphi(t-s,\cdot)|\psi_k^{\mathfrak{g}}\rangle_{\mathfrak{g}}s_k^{\mathfrak{g}}(s)ds\right) \phi_k^{\mathfrak{g}},\quad t\in [0,T],
\end{equation}
where 
\[
s_k^{\mathfrak{g}}(t)=\frac{\sin\left( \sqrt{\lambda_k^{\mathfrak{g}}}\, t\right)}{\sqrt{\lambda_k^{\mathfrak{g}}}},\quad t\in [0,T].
\]
Furthermore we have $\partial_\nu u^{\mathfrak{g}}(\varphi)\in L^2((0,T)\times \partial \mathcal{M})$ (hidden regularity) and
\[
\|\partial_\nu u^{\mathfrak{g}}(\varphi)\|_{L^2((0,T)\times \partial \mathcal{M})}\le C_0\|\varphi\|_{H^1((0,T)\times \partial \mathcal{M})},
\]
where the constant $C_0>0$ depends only on $n$, $\mathcal{M}$ and $\mathfrak{g}$.

Let $\Pi_{\mathfrak{g}}\in \mathscr{B}\left(H_{0,}^1((0,T)\times \partial \mathcal{M}),L^2((0,T)\times \partial \mathcal{M})\right)$ be the hyperbolic Dirichlet-to-Neumann map which is defined as follows
\[
\Pi_{\mathfrak{g}}(\varphi)= \partial_\nu u^{\mathfrak{g}}(\varphi),\quad \varphi \in H_{0,}^1((0,T)\times \partial \mathcal{M}).
\]
Pick  $\mathfrak{g}_1,\mathfrak{g}_2\in \mathscr{G}_0$, $\varphi\in C_0^\infty((0,T)\times \partial \mathcal{M})$ and set
\[
a_k^{\mathfrak{g}_j}(t)=\left(\int_0^t \left\langle \varphi(t-s,\cdot)|\psi_k^{\mathfrak{g}_j}\right\rangle s_k^{\mathfrak{g}_j}(s)ds\right) \psi_k^{\mathfrak{g}_j},\quad t\in [0,T],\; j=1,2.
\]
Fix an integer $\ell \ge 1$ and let
\[
w(t)=\sum_{k\le \ell}\left[ a_k^{\mathfrak{g}_1}(t)-a_k^{\mathfrak{g}_2}(t)\right],\quad t\in [0,T].
\]
We split $w$ into three terms: $w=w_1+w_2+w_3$ where 
\begin{align*}
&w_1(t)=\sum_{k\le \ell}\left(\int_0^t \left\langle \varphi(t-s,\cdot)|\psi_k^{\mathfrak{g}_1}-\psi_k^{\mathfrak{g}_2}\right\rangle s_k^{\mathfrak{g}_1}(s)ds\right) \psi_k^{\mathfrak{g}_1},
\\
&w_2(t)= \sum_{k\le \ell}\left(\int_0^t \left\langle \varphi(t-s,\cdot)|\psi_k^{\mathfrak{g}_2}\right\rangle \left(s_k^{\mathfrak{g}_1}(s)-s_k^{\mathfrak{g}_1}(s)\right)ds\right) \psi_k^{\mathfrak{g}_1},
\\
&w_3(t)=\sum_{k\le \ell}\left(\int_0^t \left\langle \varphi(t-s,\cdot)|\psi_k^{\mathfrak{g}_2}\right\rangle s_k^{\mathfrak{g}_2}(s)ds\right)\left(\psi_k^{\mathfrak{g}_1}-\psi_k^{\mathfrak{g}_2}\right).
\end{align*}
We have
\begin{align*}
&\left|\int_0^t \left\langle \varphi(t-s,\cdot)|\psi_k^{\mathfrak{g}_1}-\psi_k^{\mathfrak{g}_2}\right\rangle s_k^{\mathfrak{g}_1}(s)ds\right|
\\
&\hskip 2cm \le \left(\int_0^t \|\varphi(t-s,\cdot)\|_{L^2(\partial \mathcal{M})}\left|s_k^{\mathfrak{g}_1}(s)\right|ds\right)\left\|\psi_k^{\mathfrak{g}_1}-\psi_k^{\mathfrak{g}_2}\right\|_{L^2(\partial \mathcal{M})}
\\
&\hskip 2cm \le \sqrt{T}\left\|s_k^{\mathfrak{g}_1}\right\|_{L^\infty((0,T))}\left\|\psi_k^{\mathfrak{g}_1}-\psi_k^{\mathfrak{g}_2}\right\|_{L^2(\partial \mathcal{M})}\|\varphi\|_{L^2((0,T)\times\partial \mathcal{M})}
\\
&\hskip 2cm \le \frac{\sqrt{T}}{\sqrt{\lambda_k^{\mathfrak{g}}}}\left\|\psi_k^{\mathfrak{g}_1}-\psi_k^{\mathfrak{g}_2}\right\|_{L^2(\partial \mathcal{M})}\|\varphi\|_{L^2((0,T)\times\partial \mathcal{M})}.
\end{align*}
Since
\[
\|w_1(t)\|_{L^2(\partial \mathcal{M})}\le \sum_{k\le \ell}\left|\int_0^t \langle \varphi(t-s,\cdot)|\psi_k^{\mathfrak{g}_1}-\psi_k^{\mathfrak{g}_2}\rangle s_k^{\mathfrak{g}_1}(s)ds\right| \left\|\psi_k^{\mathfrak{g}_1}\right\|_{L^2(\partial \mathcal{M})},
\]
we obtain from the last inequality and \eqref{e3} 
\[
\|w_1\|_{L^2((0,T)\times \partial \mathcal{M})}\le CT\sum_{k\le \ell}k^{\frac{1+2s}{2n}}\left\|\psi_k^{\mathfrak{g}_1}-\psi_k^{\mathfrak{g}_2}\right\|_{L^2(\partial \mathcal{M})}\|\varphi\|_{L^2((0,T)\times\partial \mathcal{M})}.
\]
Here and henceforth $C>0$ is a generic constant depending only on $n$, $\mathcal{M}$, $s$, $\mathfrak{g}'$, $\mathfrak{g}_0$ and $\mathbf{m}$. On the other hand, using that
\[
\left| s_k^{\mathfrak{g}_j}(t)-s_k^{\mathfrak{g}_j}(t)\right|\le \frac{\left|\lambda_k^{\mathfrak{g}_1}-\lambda_k^{\mathfrak{g}_2}\right|}{\sqrt{\lambda_k^{\mathfrak{g}_1}}\sqrt{\lambda_k^{\mathfrak{g}_2}}\left(\sqrt{\lambda_k^{\mathfrak{g}_1}}+\sqrt{\lambda_k^{\mathfrak{g}_2}}\right)}
+\frac{T\left|\lambda_k^{\mathfrak{g}_1}-\lambda_k^{\mathfrak{g}_2}\right|}{\sqrt{\lambda_k^{\mathfrak{g}_2}}\left(\sqrt{\lambda_k^{\mathfrak{g}_1}}+\sqrt{\lambda_k^{\mathfrak{g}_2}}\right)},
\]
we obtain
\[
\|w_2\|_{L^2((0,T)\times \partial \mathcal{M})}\le C\sqrt{T}(1+T) \sum_{k\le \ell}k^{-\frac{1-2s}{n}}\left|\lambda_k^{\mathfrak{g}_1}-\lambda_k^{\mathfrak{g}_2}\right|\|\varphi\|_{L^2((0,T)\times \partial \mathcal{M})}.
\]
Also, we check that $w_3$ satisfies the same estimate as $w_1$. In summary we obtain that the series $\displaystyle \sum_{k\ge 1}\left[ a_k^{\mathfrak{g}_1}-a_k^{\mathfrak{g}_2}\right]$ converges in $L^2((0,T)\times \partial \mathcal{M})$ and
\begin{equation}\label{w6}
\left\|\sum_{k\ge 1}\left[ a_k^{\mathfrak{g}_1}-a_k^{\mathfrak{g}_2}\right]\right\|_{L^2((0,T)\times \partial \mathcal{M})}\le C(1+\sqrt{T})(1+T) \mathbf{d}\|\varphi\|_{L^2((0,T)\times \partial \mathcal{M})}
\end{equation}
provided that
\[
\mathbf{d}=\mathbf{d}(\mathfrak{g}_1,\mathfrak{g}_2)=\sum_{k\ge 1}\left[\alpha_k^2\left|\lambda_k^{\mathfrak{g}_1}-\lambda_k^{\mathfrak{g}_2}\right|+\sqrt{\beta_k}\left\|\psi_k^{\mathfrak{g}_1}-\psi_k^{\mathfrak{g}_2}\right\|_{L^2(\partial \mathcal{M})}\right]<\infty,
\]
where the sequences $(\alpha_k)$ and $(\beta_k)$ is given in \eqref{se}.

Noting that $C_0^\infty ((0,T)\times \partial\mathcal{M})$ is dense in $L^2((0,T)\times \partial\mathcal{M})$, we derive that \eqref{w6} holds for any $\varphi \in L^2((0,T)\times \partial\mathcal{M})$. 
In other words, $\Pi_{\mathfrak{g}_1}-\Pi_{\mathfrak{g}_2}$ extends to bounded operator from $L^2((0,T)\times \partial\mathcal{M})$ into $L^2((0,T)\times \partial\mathcal{M})$ with
\begin{equation}\label{w7}
\left\| \Pi_{\mathfrak{g}_1}-\Pi_{\mathfrak{g}_2}  \right\|_{\mathscr{B}(L^2((0,T)\times \partial\mathcal{M}))}\le C(1+\sqrt{T})(1+T) \mathbf{d}.
\end{equation}

Assume that $\mathcal{M}=\overline{\Omega}$ and denote by $\mathbf{G}_s$ the set of metrics $\mathfrak{g}\in \mathscr{G}$ so that $(\overline{\Omega}, \mathfrak{g})$ is simple (see the definition in Subsection \ref{subsection1.1}).

The problem of determining a metric tensor $\mathfrak{g}\in \mathscr{G}$ from the corresponding distance function $d_{\mathfrak{g}}$ is known as the boundary rigidity problem. In \cite{SU3} the authors show that for $p\gg 1$ there exists $\mathbf{G}^p\subset \mathbf{G}_s$ dense in $C^p(\overline{\Omega})$ so that any $\mathfrak{g}\in \mathbf{G}^k$ is uniquely determined, up to isometry which is the identity on the boundary.

\begin{theorem}\label{theoremSDNH}
$($\cite{SU2}$)$ There exist $p\ge 2$ and $0<\mu <1$, such that for any $\tilde{\mathfrak{g}}\in \mathbf{G}^p$, $T>\mathrm{diam}_{\tilde{\mathfrak{g}}}(\Omega)$ and $0<t_0 <T-\mathrm{diam}_{\tilde{\mathfrak{g}}}(\Omega)$, there exist $\eta >0$ and $\mathbf{r}$ with the property that if $\mathfrak{g}_1,\mathfrak{g}_2\in \mathscr{G}$ satisfying
\[
\| \mathfrak{g}_j-\tilde{\mathfrak{g}}\|_{C(\overline{\Omega})}\le \eta,\quad \| \mathfrak{g}_j\|_{C^p(\overline{\Omega})}\le \mathbf{r},\quad j=1,2,
\]
then one can find a $C^3$-diffeomophism $F:\overline{\Omega}\rightarrow \overline{\Omega}$ satisfying $F_{|\partial \Omega}=I$ such that
\begin{equation}\label{w8}
\|\mathfrak{g}_1-F^\ast \mathfrak{g}_2\|_{C^2(\overline{\Omega})}\le C'\left\| \Pi_{\mathfrak{g}_1}-\Pi_{\mathfrak{g}_2}  \right\|_{\mathscr{B}(H_0^1((0,t_0) \times \partial \Omega)), L^2((0,T)\times \partial\mathcal{M}))}^\mu, 
\end{equation}
where $C'>0$ only depends on $n$, $\Omega$, $T$, $t_0$, $\tilde{\mathfrak{g}}$, $p$, $\mu$, $\eta$ and $\mathbf{r}$.
\end{theorem}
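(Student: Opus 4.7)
The plan is to reduce the stability of determining $\mathfrak{g}$ from the hyperbolic Dirichlet-to-Neumann map $\Pi_{\mathfrak{g}}$ to the stability of the boundary rigidity problem by extracting the scattering relation (equivalently, the boundary distance function) from $\Pi_{\mathfrak{g}}$ in a quantitative way. The choice $T > \mathrm{diam}_{\tilde{\mathfrak{g}}}(\Omega)$ and $t_0 < T - \mathrm{diam}_{\tilde{\mathfrak{g}}}(\Omega)$ ensures that, for $\mathfrak{g}_j$ sufficiently $C$-close to $\tilde{\mathfrak{g}}$, every geodesic entering $\Omega$ at a time in $(0,t_0)$ has enough time to exit before $T$, so the scattering relation is entirely visible in the trace of the wave on $(0,T) \times \partial \Omega$.

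First, I would prove a quantitative \emph{DN-to-lens-data estimate}. Using Gaussian-beam or geometric-optics solutions of \eqref{w1} supported microlocally on a single incoming bicharacteristic, one constructs sources $\varphi_h \in H_{0,}^1((0,t_0)\times \partial \Omega)$ whose singularities concentrate on a prescribed entering covector and whose normalized $H^1$-norm is uniformly bounded. Propagation of singularities for $\partial_t^2 - \Delta_{\mathfrak{g}}$ forces $\Pi_{\mathfrak{g}}(\varphi_h)$ to concentrate on the exit covector of the corresponding geodesic. Comparing these outputs for $\mathfrak{g}_1$ and $\mathfrak{g}_2$, and using the a priori bounds $\|\mathfrak{g}_j\|_{C^p(\overline{\Omega})} \le \mathbf{r}$ and $\|\mathfrak{g}_j - \tilde{\mathfrak{g}}\|_{C(\overline{\Omega})} \le \eta$ to control parametrix remainders uniformly, one obtains
\[
\|d_{\mathfrak{g}_1} - d_{\mathfrak{g}_2}\|_{C(\partial \Omega \times \partial \Omega)} \le C_1 \bigl\| \Pi_{\mathfrak{g}_1}-\Pi_{\mathfrak{g}_2}\bigr\|_{\mathscr{B}(H_0^1((0,t_0)\times\partial\Omega), L^2((0,T)\times\partial\Omega))}^{\mu_1},
\]
for some $\mu_1 \in (0,1)$. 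The semiclassical width $h$ is optimized against the DN-norm difference, producing the Hölder loss.

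Second, I would invoke the stability version of the boundary rigidity theorem from \cite{SU3}: for $\tilde{\mathfrak{g}}\in \mathbf{G}^p$ with $p$ sufficiently large, there exist $\eta', C_2>0$ and $\mu_2 \in (0,1)$ such that for any pair of metrics that are $C^p$-bounded by $\mathbf{r}$ and $\eta'$-close to $\tilde{\mathfrak{g}}$ in $C(\overline{\Omega})$, one can find a $C^3$-diffeomorphism $F:\overline{\Omega}\to\overline{\Omega}$ fixing $\partial\Omega$ such that
\[
\|\mathfrak{g}_1 - F^*\mathfrak{g}_2\|_{C^2(\overline{\Omega})} \le C_2 \|d_{\mathfrak{g}_1}-d_{\mathfrak{g}_2}\|_{C(\partial\Omega\times\partial\Omega)}^{\mu_2}.
\]
Composing this with the first estimate and setting $\mu = \mu_1\mu_2$, $C' = C_2 C_1^{\mu_2}$ yields \eqref{w8}, after shrinking $\eta$ to $\min(\eta,\eta')$.

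The main obstacle is the first step: turning the purely microlocal fact that $\Pi_{\mathfrak{g}}$ determines the scattering relation into a quantitative Hölder estimate. The delicate points are (i) constructing the Gaussian beams uniformly in $\mathfrak{g}$ throughout the $C(\overline{\Omega})$-neighborhood of $\tilde{\mathfrak{g}}$, so that the exit time and exit covector vary continuously, and (ii) controlling the remainder of the parametrix in the $H^1 \to L^2$ topology, which is weaker than what one would like for an asymptotic construction. Both are handled by exploiting the uniform $C^p$-bound together with simplicity of $\tilde{\mathfrak{g}}$, which guarantees that the exponential map is a diffeomorphism and hence that the bicharacteristics behave uniformly. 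Once these are in place, the rest of the argument is an application of the pre-existing boundary rigidity stability theorem.
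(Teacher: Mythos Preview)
The paper does not prove this theorem at all: it is quoted from \cite{SU2} (Stefanov--Uhlmann) and used as a black box, so there is no ``paper's own proof'' to compare against. Your sketch is a reasonable outline of the strategy actually used in \cite{SU2}---extract the boundary distance (or scattering) data from $\Pi_{\mathfrak{g}}$ via geometric-optics/Gaussian-beam solutions, then feed that into the boundary-rigidity stability of \cite{SU3}---but for the purposes of the present paper no argument is required beyond the citation.
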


This theorem was generalized to the magnetic Laplace-Beltrami operator in \cite{Mo}. The case of metric tensors close to the euclidian metric was considered in \cite{SU1}. A recent logarithmic stability inequality in the case where  the metric tensor is assumed to be known in a neighborhood of the boundary was proved in \cite{Bell}. This result holds only by assuming the knowledge of the hyperbolic Dirichlet-to-Neumann in an arbitrary subboundary for $T$ is sufficiently large. The reader is also refered to \cite{SUV} where the case of incomplete boundary data was discussed.

Let $p\ge 2$ and $0<\mu <1$ as in Theorem \ref{theoremSDNH}. Fix $\tilde{\mathfrak{g}}\in \mathbf{G}^p$ and let $T$, $t_0$, $\eta$ and  $\mathbf{r}$, corresponding to that $\tilde{\mathfrak{g}}$ as they appear in Theorem \ref{theoremSDNH}. Define then
\begin{align*}
\tilde{\mathbf{G}}=\{(\mathfrak{g}_1,\mathfrak{g}_2)\in \mathscr{G}\times \mathscr{G};\; \mathfrak{g}_1{_{|\Gamma}}&=\mathfrak{g}_2{_{|\Gamma}},
\\
& \| \mathfrak{g}_j-\tilde{\mathfrak{g}}\|_{C(\overline{\Omega})}\le \eta,\quad \| \mathfrak{g}_j\|_{C^p(\overline{\Omega})}\le \mathbf{r},\; j=1,2\}.
\end{align*}

Combining \eqref{w7} and \eqref{w8}, we get the following stability estimate.

\begin{theorem}\label{theoremHS}
Let $(\mathfrak{g}_1,\mathfrak{g}_2)\in \tilde{\mathbf{G}}$ satisfying $\mathbf{d}=\mathbf{d}(\mathfrak{g}_1,\mathfrak{g}_2)<\infty$. Then one can find a $C^3$-diffeomophism $F:\overline{\Omega}\rightarrow \overline{\Omega}$ satisfying $F_{|\partial \Omega}=I$ such that
\begin{equation}\label{w9}
\|\mathfrak{g}_1-F^\ast \mathfrak{g}_2\|_{C^2(\overline{\Omega})}\le C''\mathbf{d}^\mu, 
\end{equation}
where $C''>0$ only depends on $n$, $\Omega$, $T$, $t_0$, $s$, $\tilde{\mathfrak{g}}$, $p$, $\mu$, $\eta$ and $\mathbf{r}$.
\end{theorem}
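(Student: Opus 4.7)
The plan is essentially to chain together the two preceding estimates: the operator norm bound \eqref{w7} for the difference of hyperbolic Dirichlet-to-Neumann maps in terms of the boundary spectral data distance $\mathbf{d}$, and the conditional stability estimate \eqref{w8} (Theorem \ref{theoremSDNH}) which controls $\|\mathfrak{g}_1-F^\ast\mathfrak{g}_2\|_{C^2(\overline{\Omega})}$ in terms of a hyperbolic Dirichlet-to-Neumann norm. The admissibility condition for Theorem \ref{theoremSDNH} is guaranteed exactly by the membership $(\mathfrak{g}_1,\mathfrak{g}_2)\in \tilde{\mathbf{G}}$, which was defined so that $\mathfrak{g}_1$ and $\mathfrak{g}_2$ agree on $\Gamma$ and are within $\eta$ of $\tilde{\mathfrak{g}}$ in $C(\overline{\Omega})$ while being bounded by $\mathbf{r}$ in $C^p(\overline{\Omega})$.

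The only genuinely nontrivial step is reconciling the two function-space frameworks. Estimate \eqref{w7} gives a bound on $\Pi_{\mathfrak{g}_1}-\Pi_{\mathfrak{g}_2}$ as an operator on $L^2((0,T)\times\partial\mathcal{M})$, whereas \eqref{w8} involves its norm as an operator from $H_0^1((0,t_0)\times\partial\Omega)$ to $L^2((0,T)\times\partial\mathcal{M})$. The key observation is that any $\varphi\in H_0^1((0,t_0)\times\partial\Omega)$ extends by zero to an element of $H_{0,}^1((0,T)\times\partial\Omega)\subset L^2((0,T)\times\partial\Omega)$ (the extension is admissible because $\varphi(0,\cdot)=0$ and $t_0<T$), and this extension preserves the $L^2$-norm while the $L^2$-norm is dominated by the $H^1$-norm. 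Therefore
\[
\|\Pi_{\mathfrak{g}_1}-\Pi_{\mathfrak{g}_2}\|_{\mathscr{B}(H_0^1((0,t_0)\times \partial\Omega),L^2((0,T)\times\partial\mathcal{M}))}\le \|\Pi_{\mathfrak{g}_1}-\Pi_{\mathfrak{g}_2}\|_{\mathscr{B}(L^2((0,T)\times\partial\mathcal{M}))},
\]
so \eqref{w7} yields
\[
\|\Pi_{\mathfrak{g}_1}-\Pi_{\mathfrak{g}_2}\|_{\mathscr{B}(H_0^1((0,t_0)\times \partial\Omega),L^2((0,T)\times\partial\mathcal{M}))}\le C(1+\sqrt{T})(1+T)\,\mathbf{d}.
\]

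Plugging this into \eqref{w8} produces
\[
\|\mathfrak{g}_1-F^\ast \mathfrak{g}_2\|_{C^2(\overline{\Omega})}\le C'\bigl[C(1+\sqrt{T})(1+T)\,\mathbf{d}\bigr]^{\mu},
\]
and since $T$ and $t_0$ are fixed in terms of $\tilde{\mathfrak{g}}$, the product $C'[C(1+\sqrt{T})(1+T)]^{\mu}$ can be absorbed into a single constant $C''$ depending only on $n$, $\Omega$, $T$, $t_0$, $s$, $\tilde{\mathfrak{g}}$, $p$, $\mu$, $\eta$ and $\mathbf{r}$. This is precisely \eqref{w9}.

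I do not anticipate any real obstacle: the computational heavy lifting is already contained in the derivation of \eqref{w7} (carried out earlier in the section using the spectral representation \eqref{w2} and the estimates on $s_k^{\mathfrak{g}}$) and in Theorem \ref{theoremSDNH} from \cite{SU2}. The only mildly delicate point is checking that the finiteness hypothesis $\mathbf{d}(\mathfrak{g}_1,\mathfrak{g}_2)<\infty$ is exactly what was required for the series manipulations leading to \eqref{w7}, and this is built into the definition of $\mathbf{d}$ via the weights $\alpha_k^2$ and $\sqrt{\beta_k}$ prescribed in \eqref{se}.
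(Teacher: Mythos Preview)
Your proposal is correct and follows exactly the approach indicated in the paper, which simply says ``Combining \eqref{w7} and \eqref{w8}, we get the following stability estimate.'' You have in fact supplied more detail than the paper does, by explicitly verifying the compatibility of the operator norms appearing in \eqref{w7} and \eqref{w8} via the embedding $H_0^1((0,t_0)\times\partial\Omega)\hookrightarrow L^2((0,T)\times\partial\mathcal{M})$.
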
 

It is worth noticing that, within the class of metric tensors introduced in \cite[Definition 1.1]{Bell},  one can establish with the help of \cite[Theorem 2.2]{Bell} a logarithmic stability inequality in the case of partial boundary spectral data in an open subset of $\Gamma$. We leave to the reader to write down the details.

\appendix

\section{H\"older a priori estimate for eigenfunctions}\label{A}

Recall that $\sigma >n$ is fixed,  $\xi_n=\xi_n(\sigma)=1-\frac{n}{\sigma}$ if $n\ne 5$, $p_5=1/2$ and
\[
\gamma_n=
\left\{
\begin{array}{lll}
2, &n=3,4,5,
\\
k+1,\quad &n=4k,\; k\ge 2\quad \mathrm{or}\quad n=2k+1,\; k\ge 3,
\\
k+2 &n=4k+2,\; k\ge 2.
\end{array}
\right.
\]

\begin{theorem}\label{theorem2.1}
Let $\mathfrak{g}\in \mathscr{G}'$, $\lambda \in \sigma (A_{\mathfrak{g}})$ and let $u$ be an eigenfunction corresponding to the eingenvalue $\lambda$ so that $\|u\|_{L^2(\mathcal{M})}=1$. Then
\begin{equation}\label{2.0.1}
\|u\|_{C^{1,p_n}(\mathcal{M})}\le C\lambda^{\xi_n},
\end{equation}
where the constant $C>0$ only depends of $n$, $\mathcal{M}$, $\mathfrak{g}'$ $\mathbf{m}$, and $\sigma$.
\end{theorem}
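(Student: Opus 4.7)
The plan is to establish the Hölder estimate by an $L^p$-elliptic bootstrap culminating in a Sobolev--Morrey embedding, with all constants kept uniform on $\mathscr{G}'$.

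First I would record the uniform Calder\'on--Zygmund estimate: for every $1<p<\infty$ there exists $C_p>0$ depending only on $n$, $\mathcal{M}$, $\mathfrak{g}'$, $\mathbf{m}$ and $p$ such that
\[
\|v\|_{W^{2,p}(\mathcal{M})}\le C_p\|\Delta_{\mathfrak{g}} v\|_{L^p(\mathcal{M})},\qquad v\in W^{2,p}(\mathcal{M})\cap W^{1,p}_0(\mathcal{M}),\ \mathfrak{g}\in \mathscr{G}'.
\]
This is the classical Agmon--Douglis--Nirenberg theory for the Dirichlet problem. Uniformity of $C_p$ over $\mathscr{G}'$ is the essential point and follows from the fact that the coefficients $\mathfrak{g}^{k\ell}$ vary in a bounded subset of $W^{2,\infty}(\mathcal{M})$ with ellipticity uniformly controlled from below by $\mathfrak{g}\ge\mathfrak{g}'$, while $\partial\mathcal{M}$ is $C^\infty$. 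Applied to the eigenfunction equation this reads $\|u\|_{W^{2,p}(\mathcal{M})}\le C_p\lambda\|u\|_{L^p(\mathcal{M})}$.

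Next I would bootstrap. Setting $q_0=2$ so that $\|u\|_{L^{q_0}(\mathcal{M})}=1$, and alternating the previous $W^{2,p}$-estimate with the Sobolev embedding $W^{2,q}\hookrightarrow L^{q'}$ (with $1/q'=1/q-2/n$ while $2q<n$, and the endpoint analogues into $L^r$ for every $r<\infty$ or into $L^\infty$ when $2q=n$ or $2q>n$ respectively), one obtains exponents $q_0<q_1<\dots$ and, after $j$ rounds, a bound $\|u\|_{L^{q_j}(\mathcal{M})}\le C\lambda^j$, each factor of $\lambda$ being supplied by one elliptic step. The integer $\gamma_n$ fixed in the statement is precisely the number of rounds needed so that a concluding elliptic step lands $u$ in some $W^{2,\sigma}(\mathcal{M})$ with $\sigma>n$; a case analysis on the residue of $n$ modulo $4$, tracking when the Sobolev chain first crosses the threshold $n/2$, reproduces each branch of the piecewise formula. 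The endpoint value $\xi_5=1/2$ is dictated by dimension five, where already $H^2\hookrightarrow L^{10}$ and $W^{2,10}\hookrightarrow C^{1,1/2}$ close the bootstrap after exactly two rounds.

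The proof is then completed by the Sobolev--Morrey embedding $W^{2,\sigma}(\mathcal{M})\hookrightarrow C^{1,1-n/\sigma}(\mathcal{M})=C^{1,\xi_n}(\mathcal{M})$ for $\sigma>n$ (with the endpoint $\sigma=10$ in dimension five), which, combined with the $W^{2,\sigma}$-bound produced by the bootstrap, yields $\|u\|_{C^{1,\xi_n}(\mathcal{M})}\le C\lambda^{\gamma_n}$. The main obstacle will be the bookkeeping: because only the $W^{2,\infty}$ regularity of the metric is assumed, higher-order $H^{2k}$-estimates are unavailable beyond small $k$, forcing the iteration to be run entirely in the $L^p$-scale where Calder\'on--Zygmund theory applies for every finite $p$ with constants uniform on $\mathscr{G}'$; matching the iteration count precisely to the piecewise definition of $\gamma_n$ requires particular care at the dimensional thresholds $n=4$ (critical Sobolev exponent) and $n=5$ (Morrey endpoint).
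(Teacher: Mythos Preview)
Your proposal is correct and follows essentially the same route as the paper: both start from the $H^2$ bound $\|u\|_{H^2}\le C\lambda$, iterate the $W^{2,p}$ a priori estimate (the paper cites Gilbarg--Trudinger, Theorem~9.14) against Sobolev embeddings in a dimension-by-dimension case analysis, and close with the Morrey embedding $W^{2,\sigma}\hookrightarrow C^{1,1-n/\sigma}$; you have also correctly identified the special treatment needed at $n=4$ (critical embedding) and $n=5$ (endpoint $W^{2,10}\hookrightarrow C^{1,1/2}$). Your final bound $\|u\|_{C^{1,\xi_n}}\le C\lambda^{\gamma_n}$ is exactly what the paper's proof establishes (the exponent $\xi_n$ appearing on $\lambda$ in the displayed statement is a typo for $\gamma_n$).
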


\begin{proof}
 In this proof, $C>0$ denotes a generic constant only depending of $n$, $\mathcal{M}$, $\mathfrak{g}'$, $\mathbf{m}$ and $\sigma$.  We already know that
\begin{equation}\label{2.0}
\|u\|_{H^2(\mathcal{M})}\le C\lambda .
\end{equation}
 
Let $n=3$. As $H^2(\mathcal{M})$ is continuously embedded in $L^\infty (\mathcal{M})$, \eqref{2.0} yields
\[
\|u\|_{L^\sigma(\mathcal{M})}\le C\lambda .
\]
Applying \cite[Theorem 9.14 in page 240]{GT} we find
\begin{align*}
\|u\|_{W^{2,\sigma}(\mathcal{M})}&\le C\lambda \|u\|_{L^\sigma(\mathcal{M})}
\\
&\le C\lambda^2.
\end{align*}
This and the fact that $W^{2,\sigma}(\mathrm{M})$ is continuously embedded in $C^{1,1-\frac{3}{\sigma}}(\mathcal{M})$ imply
\[
\|u\|_{C^{1,1-\frac{3}{\sigma}}(\mathcal{M})}\le C\lambda^2.
\]

Next, consider the case $n=4$. For $1<r<2$, since $H^2(\mathrm{M})$ is continuously embedded in $W^{2,r}(\mathrm{M})$, we derive from \eqref{2.0}
\begin{equation}\label{equ7}
\|u\|_{W^{2,r}(\mathcal{M})}\le C\lambda.
\end{equation}
Using that $W^{2,r}(\mathcal{M})$ is continuously embedded in $L^{\hat{r}}(\mathcal{M})$ with $\displaystyle \hat{r}=\frac{r}{2-r}$, we obtain from \eqref{equ7} 
\begin{equation}\label{equ8}
\|u\|_{L^{\hat{r}}(\mathrm{M})}\le C\lambda.
\end{equation}
Again, \cite[Theorem 9.14 in page 240]{GT} yields
\[
\|u\|_{W^{2,\hat{r}}(\mathcal{M})}\le C\lambda \|u\|_{L^{\hat{r}}(\mathcal{M})}
\]
which, combined with \eqref{equ8}, implies
\begin{equation}\label{equ10}
\|u\|_{W^{2,\hat{r}(}\mathcal{M})}\le C\lambda^2.
\end{equation}
We choose $r$ in such a way that $\hat{r}=\sigma$. As $W^{2,\sigma}(\mathcal{M})$ is continuously embedded in $C^{1,1-\frac{4}{\sigma}}(\mathcal{M})$ we find 
\[
\|u\|_{C^{1,1-\frac{4}{\sigma}}(\mathrm{M})}\le C\lambda ^2.
\]

Next, assume that $n>4$. This condition guarantees  that $H^2(\mathcal{M})$ is continuously embedded in $L^{r_1}(\mathrm{M})$ where $\displaystyle r_1=\frac{2n}{n-4}$. Hence \eqref{2.0} yields
\[
\|u\|_{L^{r_1}(\mathcal{M})}\le C\lambda,
\]
from which we derive  as in the preceding case
\begin{equation}\label{equ12}
\|u\|_{W^{2,r_1}(\mathcal{M})}\le C\lambda^2.
\end{equation}

If $n=5$ then $W^{2,10}(\mathcal{M})$ is continuously embedded in $C^{1,\frac{1}{2}}(\mathcal{M})$ and therefore \eqref{equ12} yields
\[
\|u\|_{C^{1,\frac{1}{2}}(\mathcal{M})}\le C\lambda^2.
\]
In the sequel
\[
f(r)=\frac{nr}{n-2r},\quad 2\le 2r<n.
\]

Assume now that $n=4k$, $k\ge 2$. In this case $\displaystyle r_1=\frac{2k}{k-1}=\frac{2n}{n-4}$. Then let
\[
r_{j+1}=f(r_j),\quad 1\le j\le k-1,
\]
Simple calculations give $\displaystyle f(r)=\frac{2kr}{2k-r}$, $1\le r<2k$, from which we obtain
\[
r_j=\frac{2k}{k-j},\quad 1\le j\le k-1.
\]
Suppose that we proved for $1\le j<k-2$ 
\begin{equation}\label{equ12.9}
\|u\|_{W^{2,r_j}(\mathcal{M})}\le C\lambda^{j+1}.
\end{equation}
Using that $W^{2,r_j}(\mathcal{M})$ is continuously embedded in $L^{r_{j+1}}(\mathcal{M})$ we get
\[
\|u\|_{L^{r_{j+1}}(\mathcal{M})}\le C\lambda^{j+1}.
\]
A priori estimate in $W^{2,r_{j+1}}(\mathcal{M})$ then gives
\[
\|u\|_{W^{2,r_{j+1}}(\mathcal{M})}\le C\lambda^{j+2}.
\]
In other words, we proved by induction in $j$ that \eqref{equ12.9} holds.

Observe that, since $2r_{k-1}=4k=n$, we know that $W^{2,r_{k-1}}(\mathcal{M})$ is not necessarily continuously embedded in $L^{r_{k-1}}(\mathcal{M})$. For each $1<s_k<2k$, as $W^{2,s_k}(\mathcal{M})$ is continuously embedded in $W^{2,r_{k-1}}(\mathcal{M})$, it follows from \eqref{equ12.9} that
\[
\|u\|_{W^{2,s_k}(\mathcal{M})}\le C\lambda^{k} 
\]
and hence
\[
\|u\|_{L^{f(s_k)}(\mathcal{M})}\le C\lambda^{k} .
\]
Again, using a priori estimate in $W^{2,f(s_k)}$, we obtain
\begin{equation}\label{equ12.12}
\|u\|_{W^{2,f(s_k)}(\mathcal{M})}\le C\lambda^{k+1}.
\end{equation}
We choose $s_k$ in such a way that $f(s_k)=\sigma$. In that case \eqref{equ12.12} takes the form
\begin{equation}\label{equ12.13}
\|u\|_{W^{2,\sigma}(\mathcal{M})}\le C\lambda^{k+1}.
\end{equation}
Finally, using that $W^{2,\sigma}(\mathcal{M})$ is continuously embedded in $C^{1,1-\frac{n}{\sigma}}(\mathcal{M})$ and \eqref{equ12.13} we obtain
\[
\|u\|_{C^{1,1-\frac{n}{\sigma}}(\mathcal{M})}\le C\lambda^{k+1}.
\]

Consider  now the case $n=4k+2$ with $k\ge 1$. In this case
\[
f(r)=\frac{(2k+1)r}{2k+1-r},\quad r<2k+1\quad  \mbox{and}\quad  r_j=\frac{4k+2}{2(k-j)+1},\quad 1\le j\le k.
\]
In particular we have $r_k=n$ and  
\begin{equation}\label{equ12.9.0}
\|u\|_{W^{2,n}(\mathcal{M})}\le C\lambda^{k+1}.
\end{equation}
We proceed as before by  choosing $1<s_k<4k+2$ so that $f(s_k)=\sigma$. We obtain
\[
\|u\|_{W^{2,\sigma}(\mathcal{M})}\le C\lambda^{k+2}
\]
and hence
\begin{equation}\label{equ12.14.0}
\|u\|_{C^{1,1-\frac{n}{\sigma}}(\mathcal{M})}\le C\lambda^{k+2}.
\end{equation}

It remains to study the case $n=2k+1$, $k\ge 3$, for which we have
\[
r_1=\frac{4k+2}{2k-3},\quad r_j=f(r_{j-1})=\frac{4k+2}{2k-(2j+1)},\quad 2\le j\le k-1.
\]
Here we have \[
f(r)=\frac{(2k+1)r}{(2k+1)-2r},\quad 1\le 2r<2k+1.
\]
As above, since 
\[
\|u\|_{W^{2,r_j}(\mathcal{M})}\le C\lambda^{1+j},\quad 1\le j\le k-1,
\]
taking $j=k-1$, we find
\[
\|u\|_{W^{2,n}(\mathcal{M})}\le C\lambda^k,
\]
from which we derive similarly to the preceding case
\[
\|u\|_{C^{1,1-\frac{n}{\sigma}}(\mathcal{M})}\le C\lambda^{k+1}.
\]
The proof is then complete.
\end{proof}

\end{document}